\algnewcommand{\Initialize}[1]{
  \State \textbf{Initialize:}
  \Statex \hspace*{\algorithmicindent}\parbox[t]{.8\linewidth}{\raggedright #1}
}
\renewcommand{\Comment}[2][.5\linewidth]{%
  \leavevmode\hfill\makebox[#1][l]{$\triangleright$~#2}}
\newcommand{\vect}[1]{\boldsymbol{#1}}
\newcommand{\vectt}[1]{\boldsymbol{\mathbf{#1}}}
\crefname{hypothesis}{Hypothesis}{Hypotheses}
\title{Computing multiple solutions of topology optimization problems \thanks{Accepted, January 2021.
\funding{The first author is supported by the EPSRC Centre for Doctoral Training in Partial Differential Equations: Analysis and Applications [grant number  EP/L015811/1] and The MathWorks, Inc. The second author is supported by the Engineering and Physical Sciences Research Council [grant number EP/R029423/1]. The third author is supported by the German Research Foundation [DFG-Grant SU 963/1-1]}}}
\author{Ioannis P.~A.~Papadopoulos\thanks{Mathematical Institute, University of Oxford, Oxford, UK (\email{ioannis.papadopoulos@maths.ox.ac.uk}.}
\and Patrick E.~Farrell\thanks{Mathematical Institute, University of Oxford, Oxford, UK (\email{patrick.farrell@maths.ox.ac.uk}).}
\and Thomas M.~Surowiec \thanks{Department of Mathematics and Computer Science,  Philipps-Universit\"at Marburg, Marburg, DE (\email{surowiec@mathematik.uni-marburg.de}).}}
\begin{document}

\maketitle

\begin{abstract}
Topology optimization problems often support multiple local minima due to a lack of convexity. Typically, gradient-based techniques  combined with continuation in model parameters are used to promote convergence to more optimal solutions; however, these methods can fail even in the simplest cases. In this paper, we present an algorithm to perform a systematic exploratory search for the solutions of the optimization problem via second-order methods without a good initial guess. The algorithm combines the techniques of deflation, barrier methods and primal-dual active set solvers in a novel way. We demonstrate this approach on several numerical examples, observe mesh-independence in certain cases and show that multiple distinct local minima can be recovered.
\end{abstract}

\begin{keywords}
topology optimization, deflation, barrier methods, second-order methods
\end{keywords}

\begin{AMS}
35Q35, 49M15, 65K05, 65K10, 74P05, 74P10, 90C26, 90C51
\end{AMS}

\section{Introduction}
Topology optimization has become popular as an effective technique in structural and additive manufacturing, and has found uses in architecture, medicine and material science \cite{Adam2019, Jang2008, Liu2018}. The objective is to find the optimal distribution of a fluid or material within a given domain that minimizes a problem-specific cost functional. In contrast to shape optimization, the topology of the structure does not need to be chosen a priori. 

There are several mathematical parametrizations for the topology of a material including density approaches \cite{Bendsoe1989, Bendsoe2004, Borrvall2003, Mlejnek1992} and level set methods \cite{Allaire2002, Allaire2004, Wang2003}; these can be optimized by a variety of strategies such as topological derivatives \cite{Sokolowski1999}, evolutionary methods \cite{Xie1993}, the method of moving asymptotes \cite{Svanberg1987}, and barrier methods \cite{Evgrafov2014,Hoppe2002,Maar2000,Rojas-Labanda2015}. We choose to represent our topology with the density approach. This introduces a function, denoted $\rho$, that represents the material distribution over the given domain. Ideally we would find an optimizing material distribution $\rho:\Omega \to \{0,1\}$ indicating presence or absence of material. However, this is numerically intractable in general and we therefore consider densities $\rho:\Omega \to [0,1]$ in order to exploit continuous optimization techniques. The model is then regularized to favor solutions where $\rho$ is close to zero or one. 

Due to the nonlinear relation between $\rho$ and the solution of the underlying physical system, multiple local minima can occur even in problems with a linear governing partial differential equation (PDE). For example, minimizing the power dissipation of a fluid governed by the Stokes equations flowing through a pipe can give rise to distinct pipe configurations that locally minimize the power lost to dissipation \cite[Sec.\ 4.5]{Borrvall2003}. Currently, the main technique to address this is the use of \textit{continuation methods} to promote convergence to better local minima. However, Stolpe and Svanberg \cite{Stolpe2001} have provided elementary examples where these continuation methods fail. For example, a solid isotropic material with penalization (SIMP) formulation \cite{Bendsoe2004} of the compliance minimization of a six-bar truss can be reduced to the optimization problem \cite[Sec.\ 3.1]{Stolpe2001},
\begin{align*}
&\min_{(x_1, x_2) \in \mathbb R^2}
\left( 
\max\{
\frac{8 \beta_t}{x_1^{p_s} + 5x_2^{p_s}} + \frac{2 \beta_t}{5x_1^{p_s} + x_2^{p_s}}, \frac{8}{x_1^{p_s} + 5x_2^{p_s}} + \frac{18}{5x_1^{p_s} + x_2^{p_s}}
\}\right) \\ 
&\indent \text{such that} \;\; x_1 + x_2 = 1,\quad
0 \le x_1,x_2 \le 1.
\end{align*}
Here $p_s$ denotes the SIMP continuation parameter and $\beta_t = 2(1-\nu_t^2)/E$, where $\nu_t$ is the Poisson ratio and $E$ is the modulus of elasticity. SIMP is used to penalize solutions that are not either zero or one and is further discussed in \cref{sec:compliance}. A typical strategy is to find a minimizer to the optimization problem at $p_s =1$, and  then at each continuation step use the previous solution as initial guess for the next value of $p_s$. In this case, suppose we fix $\beta_t = 2.6$. A poor starting guess for $p_s=1$ can converge to the local minimum $x = (0.5,0.5)$. Then even as $p_s \to \infty$, the continuation method will always return $x = (0.5,0.5)$ and will not converge to the true global solution, $x = (0,1)$. 

The calculation of multiple stationary points is important because iterative methods often give no guarantee whether they converge to a local or global minimum. By finding multiple stationary points, one is able to choose the best available, in a postprocessing step. Furthermore, an iterative method may converge to a stationary point which is undesirable due to manufacturing or aesthetic reasons; thus industrial applications can benefit from having a choice of multiple locally optimal configurations \cite{Doubrovski2011}.

In this paper we formulate an algorithm, which we call the \textit{deflated barrier method}, for finding multiple stationary points of topology optimization problems and present several large-scale numerical examples arising from the finite element discretization of PDEs. An example we consider is the topology optimization of the power dissipation of fluid flow governed by the incompressible Navier--Stokes equations on a rectangular domain with five small decagonal holes. We discover 42 stationary points of this optimization problem with the deflated barrier method. The material distribution of these solutions are shown in \cref{fig:fiveholes-navier}. 

The deflated barrier method is a combination of \textit{deflation} \cite{Brown1971, Farrell2015, Farrell2019}, \textit{barrier methods} \cite{Fiacco1990, Forsgren2002, Frisch1955, Schiela2007, Schiela2008,Ulbrich2009, Weiser2008}, \textit{primal-dual active set solvers} \cite{Benson2003, HintermullerIto2003} and \textit{predictor-corrector methods} \cite{Seydel2010}. The combination of primal-dual active set solvers, barrier and deflation methods in the manner proposed is novel. The combination does not suffer the poor behavior that barrier methods traditionally exhibit as the barrier parameter approaches zero. In fact, in our numerical examples, the combination performs better than the optimize-then-discretize formulation of the primal-dual interior point method where Newton--Kantorovich iterates are used to solve the subproblems, either approximately or exactly. The predictor-corrector method is also adapted for use with box-constrained variables to ensure the predictor is feasible. The main contribution of this work is an algorithm to robustly determine multiple solutions to nonconvex, inequality and box-constrained infinite-dimensional optimization problems starting from poor initial guesses. 

Other approaches to computing multiple solutions of topology optimization
problems are possible. Zhang and Norato \cite{Zhang2018} apply the
\emph{tunneling} method \cite{Levy1984} to these problems, adapting the method
of moving asymptotes. Tunneling proceeds by finding a single minimum, then
looking for other controls that yield the same functional value (attempting to
tunnel into other basins) by solving an auxiliary equation. Deflation is used in
the tunneling phase to ensure that the Gauss--Newton procedure applied to the
tunneling functional does not converge to the current state.

The outline of the paper is as follows. In \cref{sec:topoptformulation} we formulate some topology optimization problems for pipe design and structural compliance. The deflated barrier method is described in \cref{sec:alg}. Several examples of topology optimization problems are given in \cref{sec:numerical}, where we discover multiple solutions for Navier--Stokes flow, Stokes flow, and structural compliance, and consider the performance of our algorithm. In \cref{sec:conclusions} we outline our conclusions. A result concerning the equivalence of Hinterm\"uller et al.'s primal-dual active set strategy \cite{HintermullerIto2003} and Benson and Munson's reduced space active-set strategy \cite{Benson2003} is given in \cref{sec:BMsolver}. In \cref{sec:tangentpredictiontask} we describe our novel feasible tangent prediction method. 
\begin{figure}
\centering
\includegraphics[width = 0.15\textwidth]{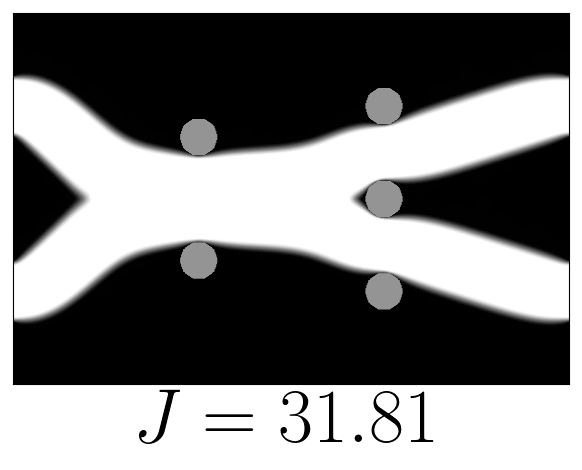}
\includegraphics[width = 0.15\textwidth]{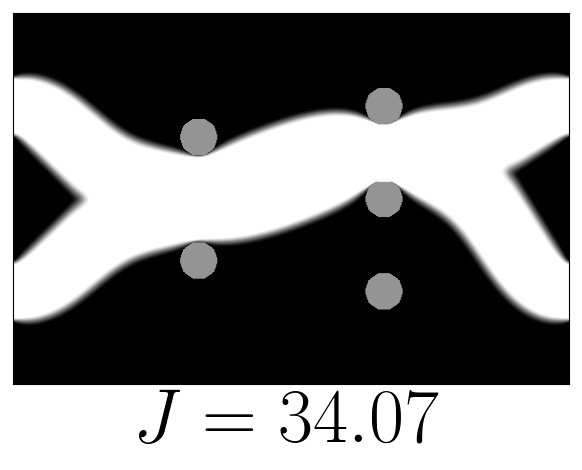}
\includegraphics[width = 0.15\textwidth]{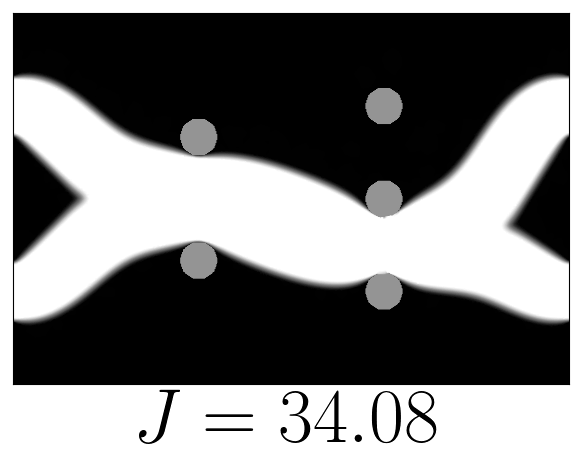}
\includegraphics[width = 0.15\textwidth]{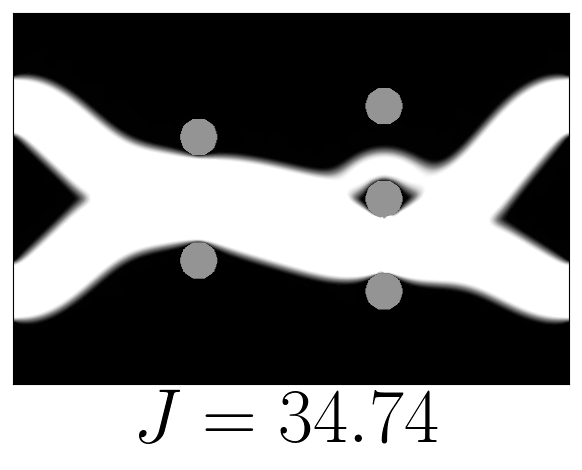}
\includegraphics[width = 0.15\textwidth]{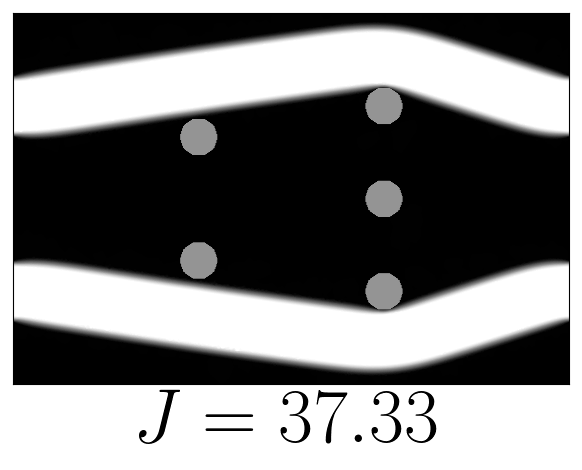}
\includegraphics[width = 0.15\textwidth]{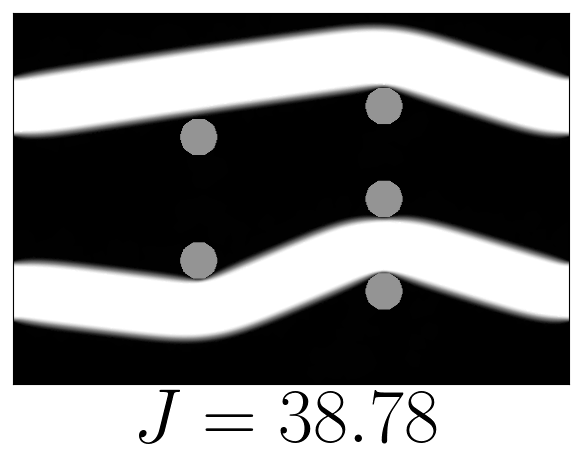}
\includegraphics[width = 0.15\textwidth]{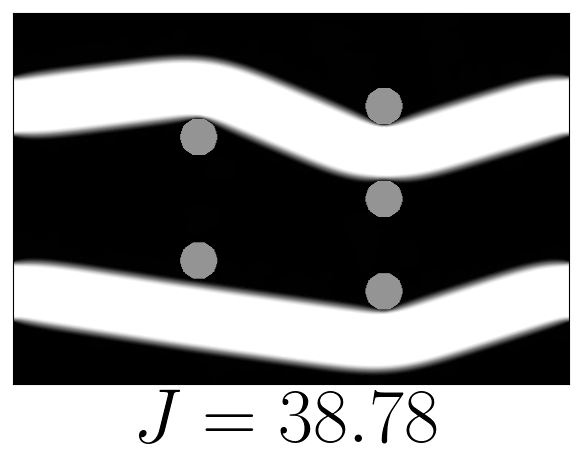}
\includegraphics[width = 0.15\textwidth]{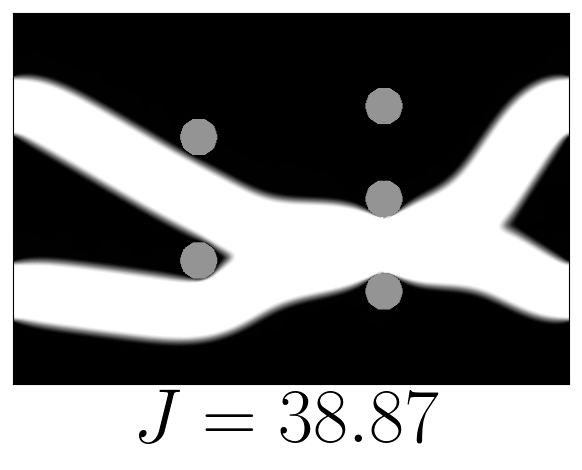}
\includegraphics[width = 0.15\textwidth]{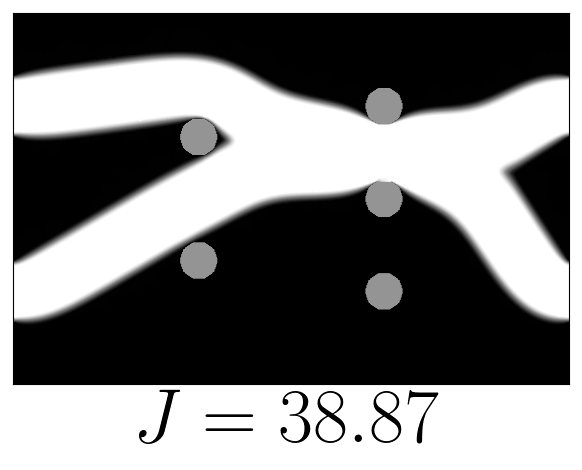}
\includegraphics[width = 0.15\textwidth]{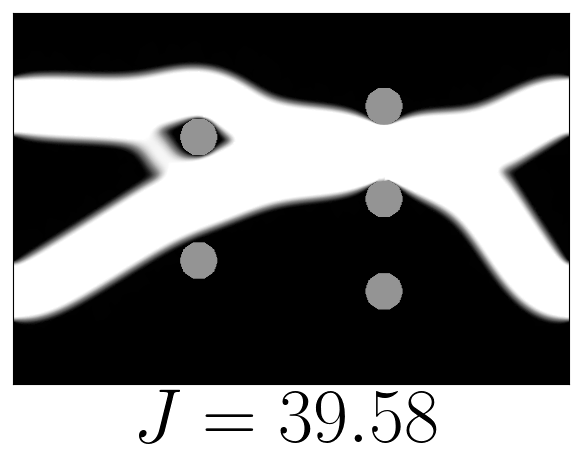}
\includegraphics[width = 0.15\textwidth]{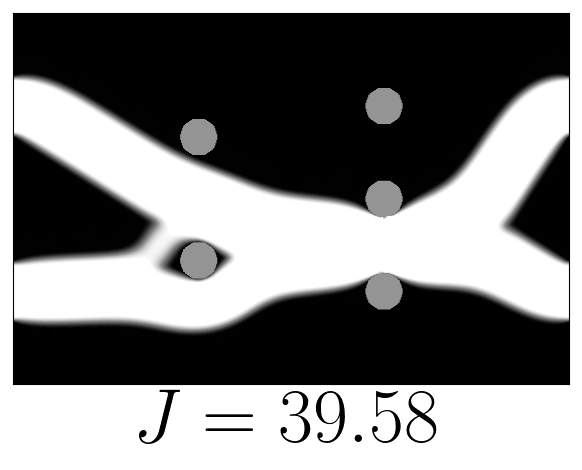}
\includegraphics[width = 0.15\textwidth]{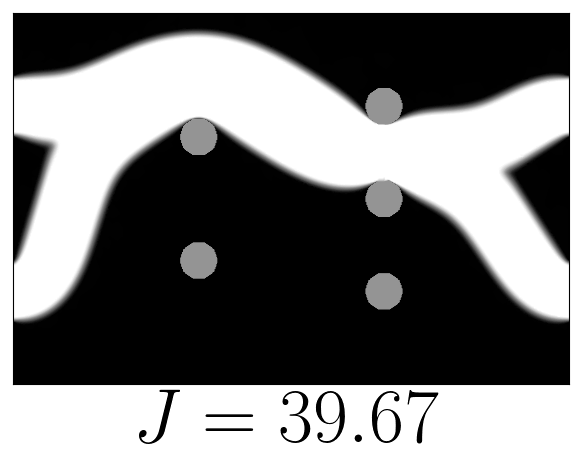}
\includegraphics[width = 0.15\textwidth]{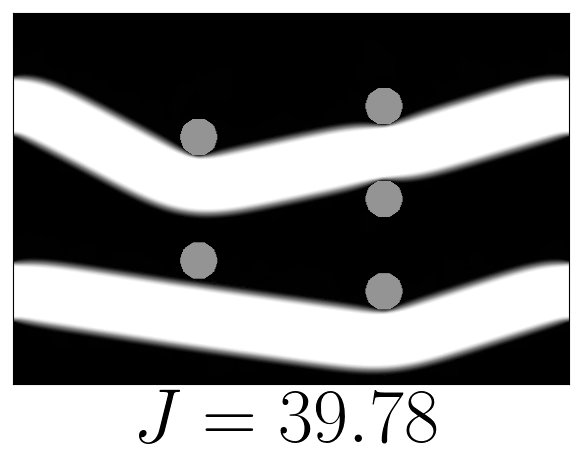}
\includegraphics[width = 0.15\textwidth]{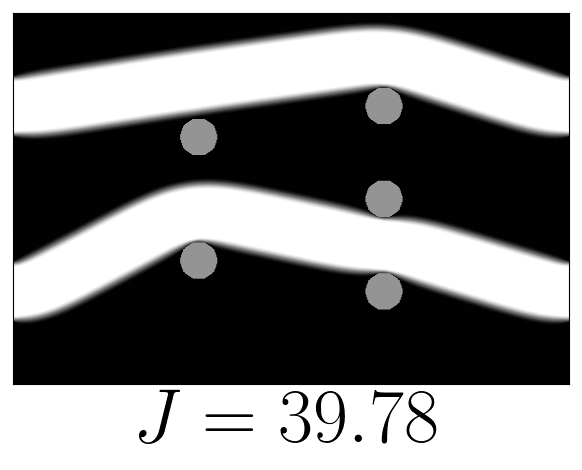}
\includegraphics[width = 0.15\textwidth]{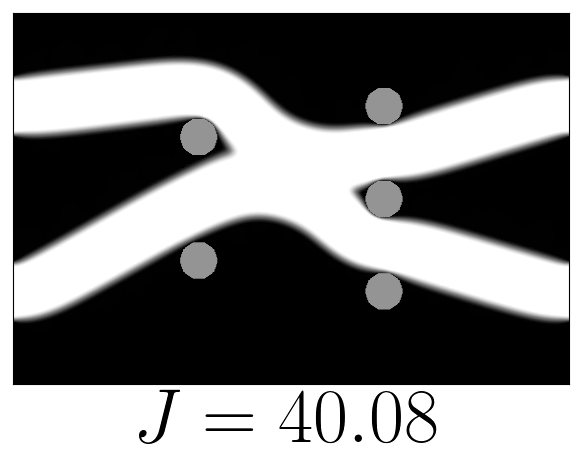}
\includegraphics[width = 0.15\textwidth]{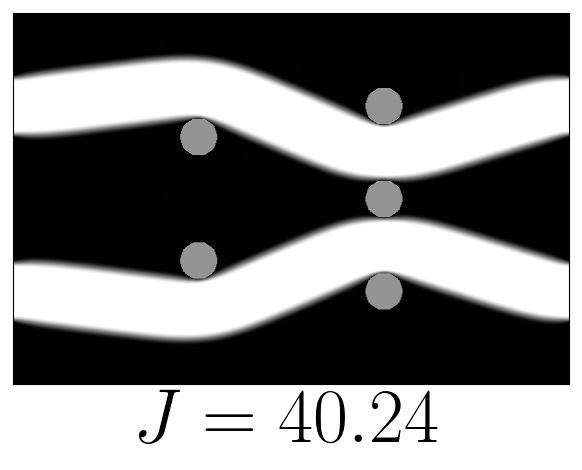}
\includegraphics[width = 0.15\textwidth]{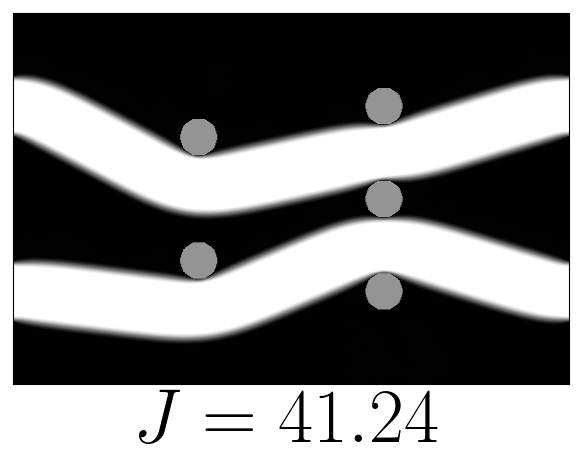}
\includegraphics[width = 0.15\textwidth]{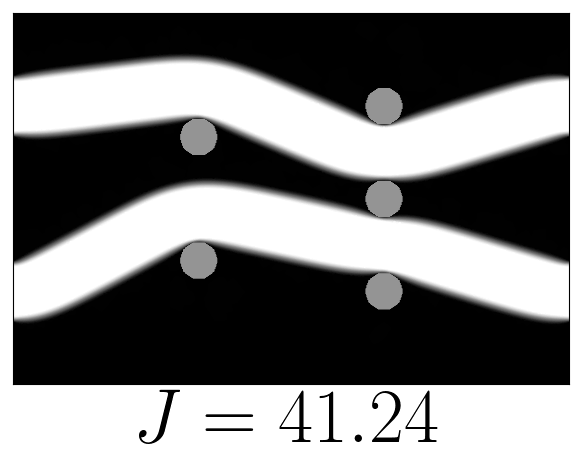}
\includegraphics[width = 0.15\textwidth]{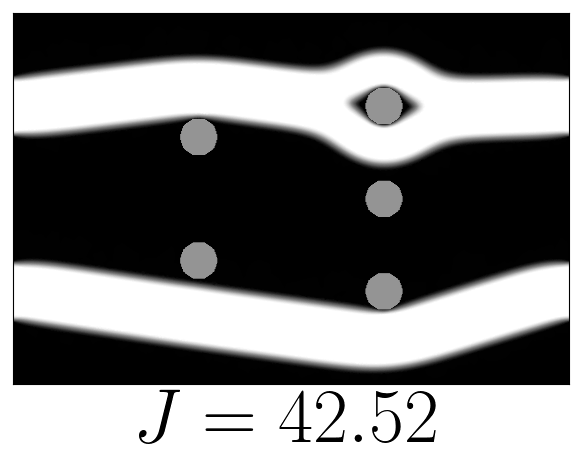}
\includegraphics[width = 0.15\textwidth]{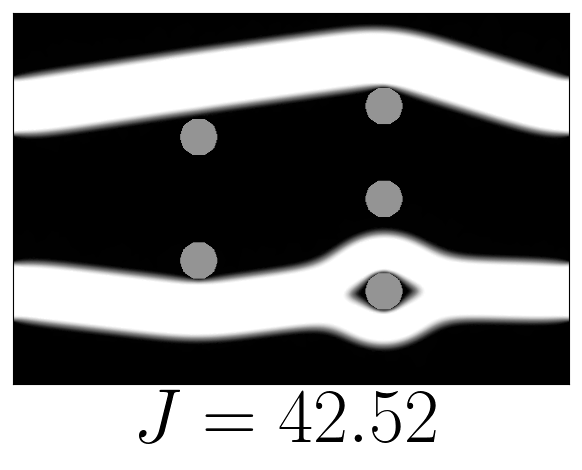}
\includegraphics[width = 0.15\textwidth]{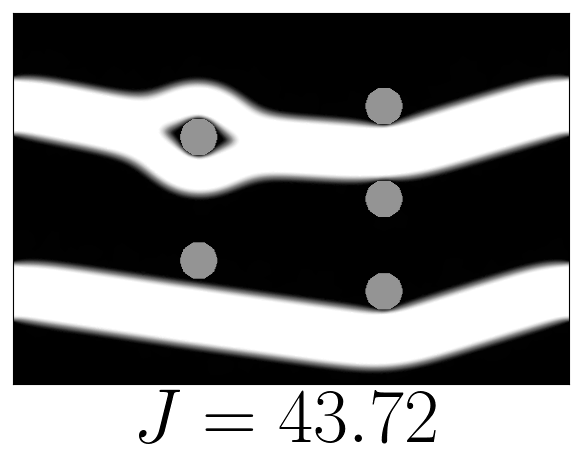}
\includegraphics[width = 0.15\textwidth]{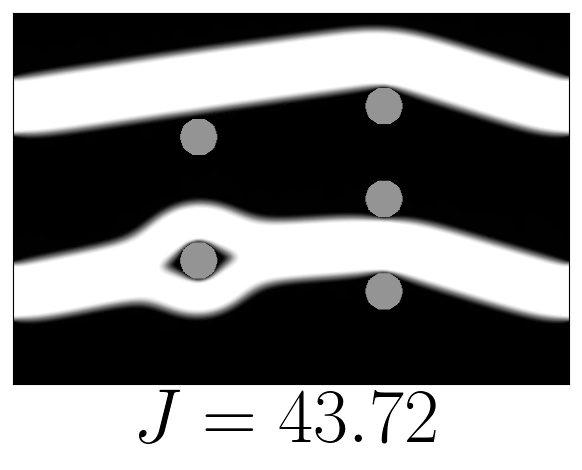}
\includegraphics[width = 0.15\textwidth]{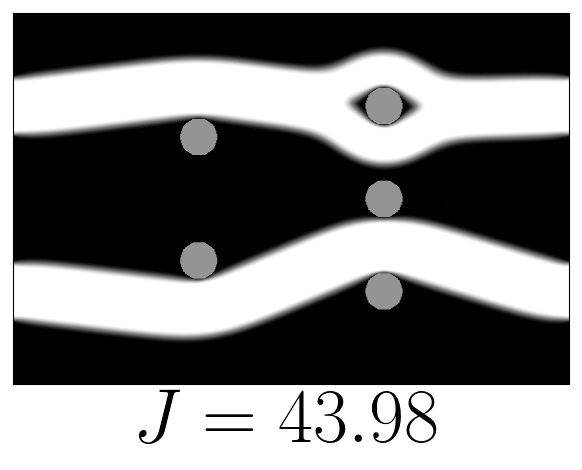}
\includegraphics[width = 0.15\textwidth]{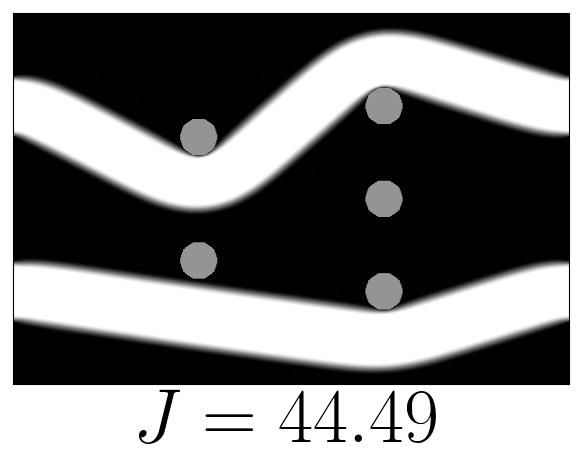}
\includegraphics[width = 0.15\textwidth]{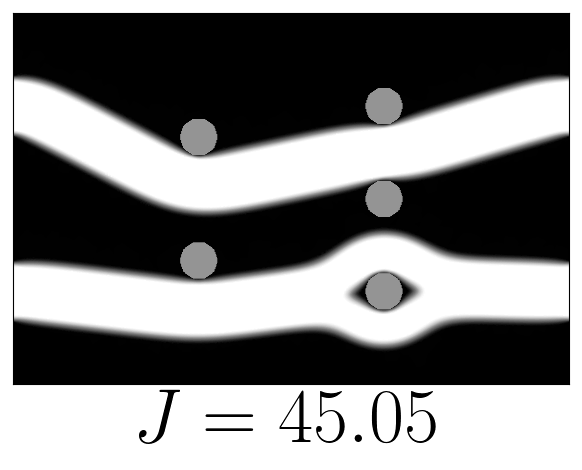}
\includegraphics[width = 0.15\textwidth]{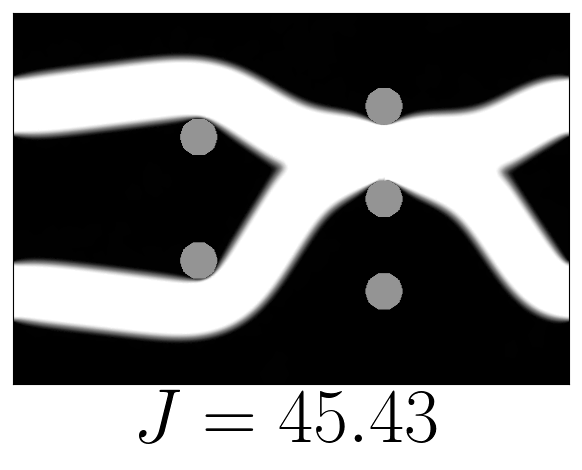}
\includegraphics[width = 0.15\textwidth]{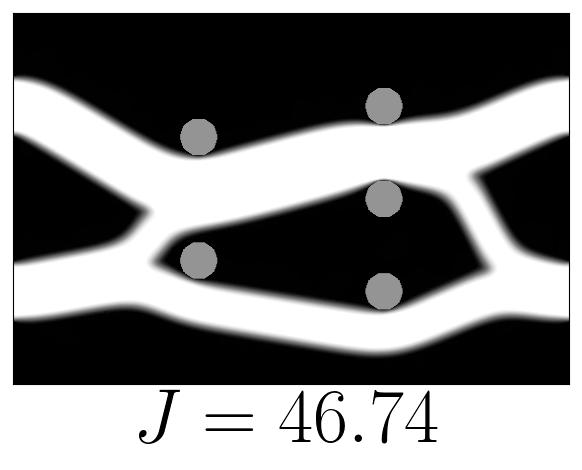}
\includegraphics[width = 0.15\textwidth]{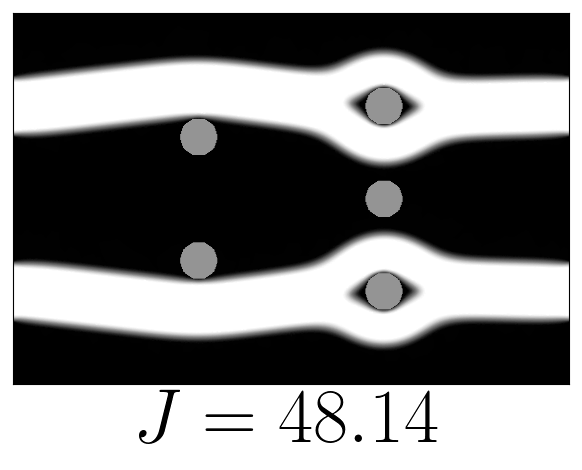}
\includegraphics[width = 0.15\textwidth]{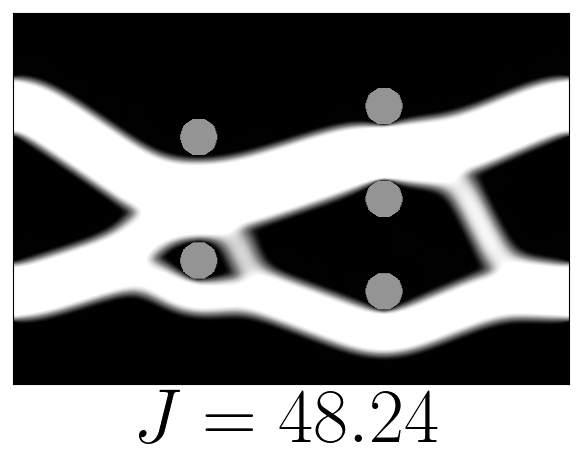}
\includegraphics[width = 0.15\textwidth]{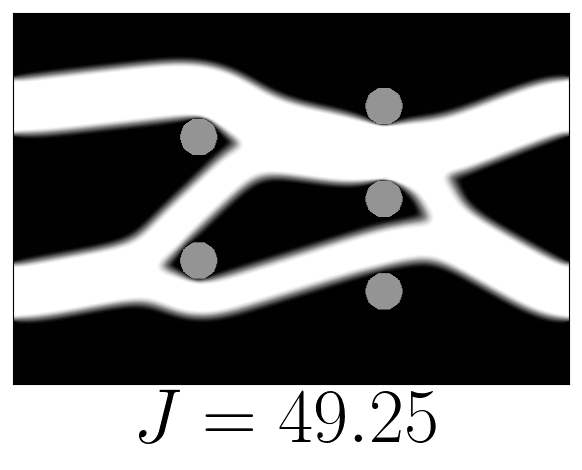}
\includegraphics[width = 0.15\textwidth]{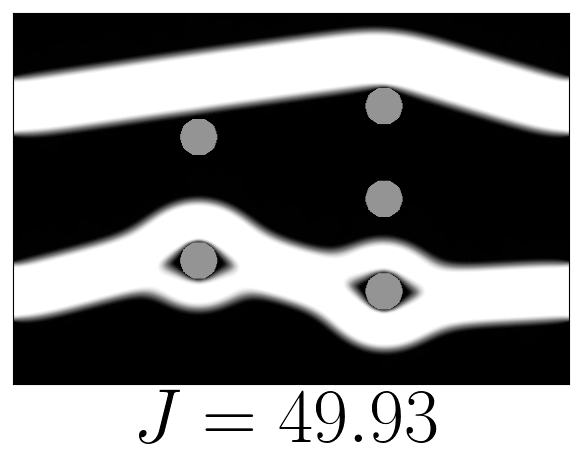}
\includegraphics[width = 0.15\textwidth]{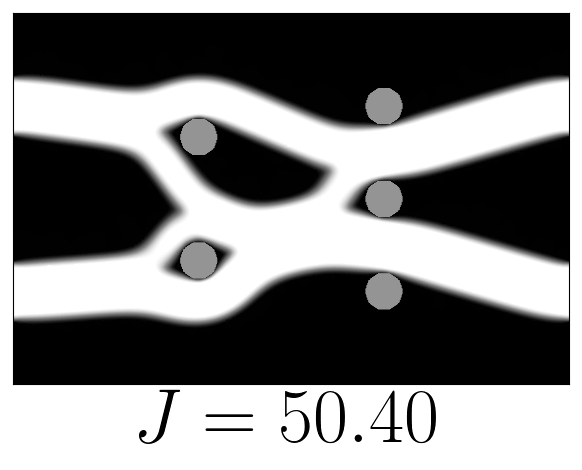}
\includegraphics[width = 0.15\textwidth]{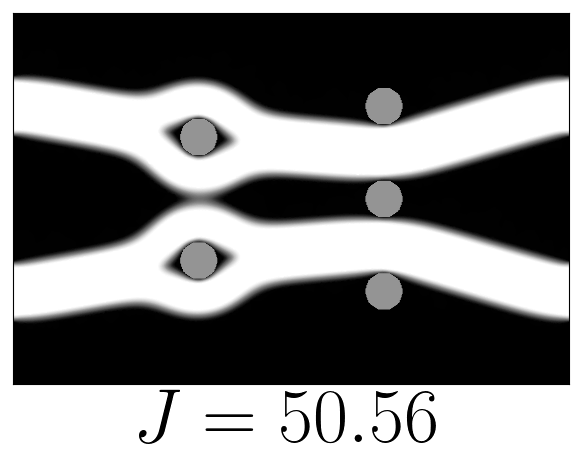}
\includegraphics[width = 0.15\textwidth]{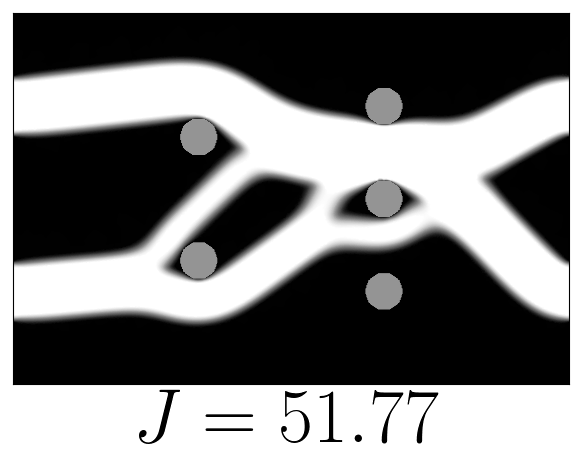}
\includegraphics[width = 0.15\textwidth]{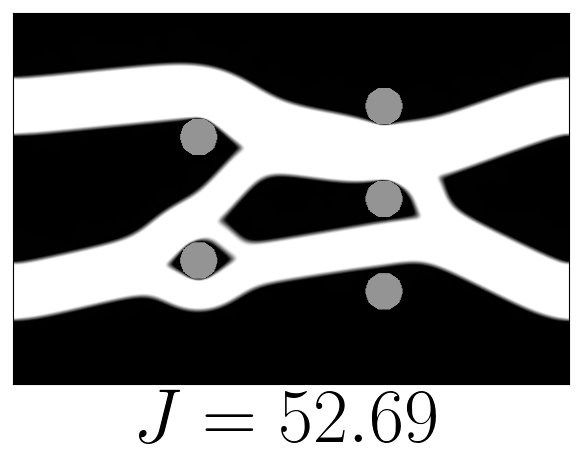}
\includegraphics[width = 0.15\textwidth]{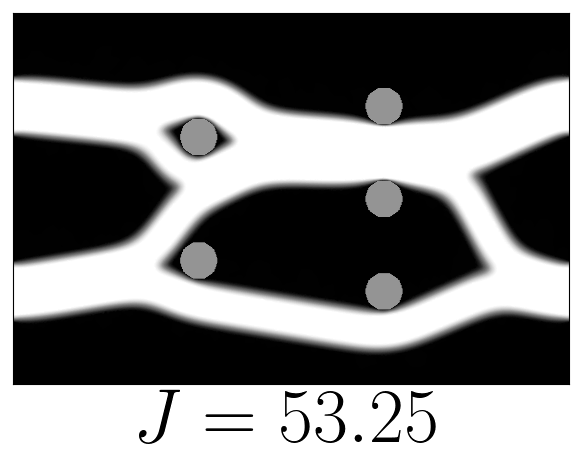}
\includegraphics[width = 0.15\textwidth]{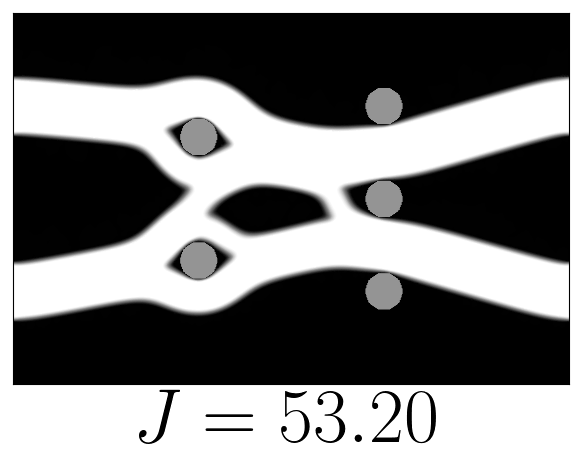}
\includegraphics[width = 0.15\textwidth]{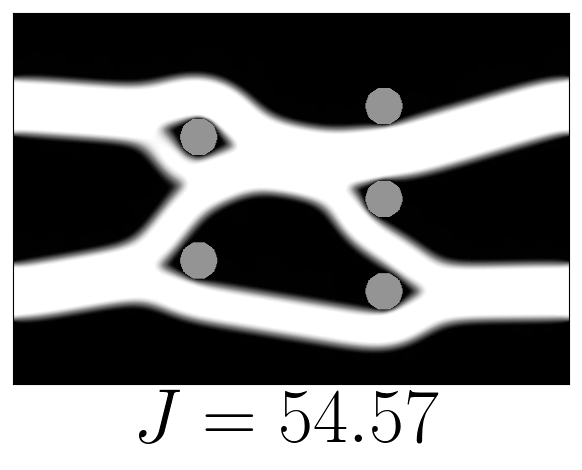}
\includegraphics[width = 0.15\textwidth]{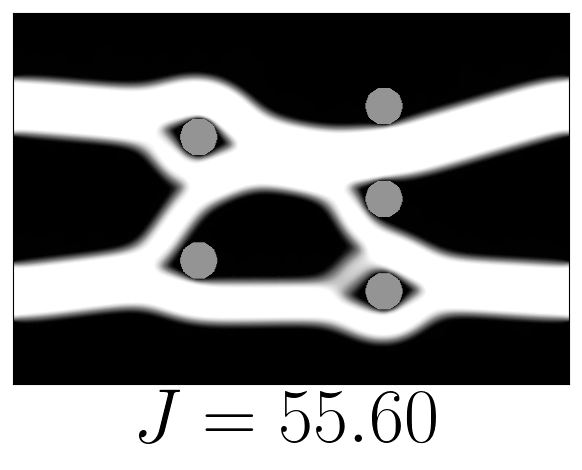}
\includegraphics[width = 0.15\textwidth]{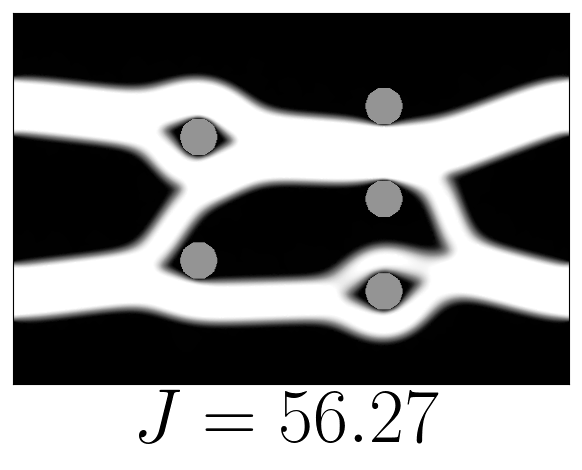}
\includegraphics[width = 0.15\textwidth]{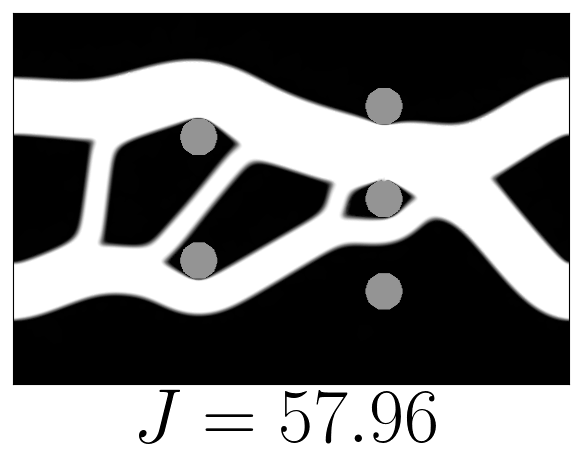}
\includegraphics[width = 0.15\textwidth]{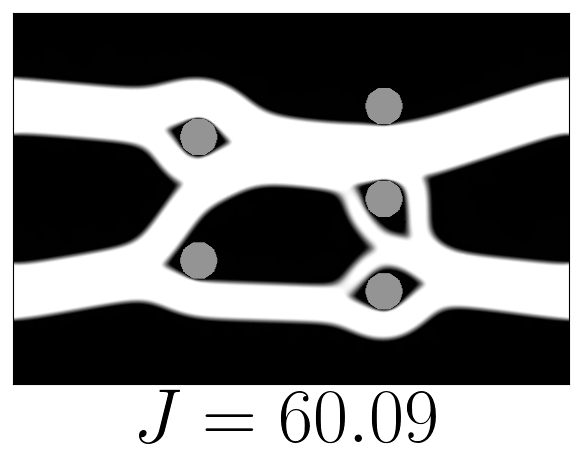}
\caption{The material distribution of 42 stationary points of the five-holes double-pipe optimization problem as discovered by the deflated barrier method, and their associated energies $J$. The fluid flow is governed by the incompressible Navier--Stokes equations. The formulation of the problem is described in \cref{sec:fiveholesdoublepipe}. Black corresponds to a value of $\rho = 0$, white corresponds to a value of $\rho = 1$, and the gray regions are the five small holes.}
\label{fig:fiveholes-navier}
\end{figure}
\newpage
\section{Topology optimization formulations}
\label{sec:topoptformulation}
\subsection{Topology optimization of Stokes flow}
\label{sec:borrvall}
We consider the formulation of the topology optimization of fluids proposed in the pioneering work of Borrvall and Petersson \cite{Borrvall2003}. They derive a `generalized Stokes problem' incorporating a material distribution variable which has a value of one where fluid is present and zero where there is void. The derived optimization problem requires no further regularization for well-posedness, in contrast to structural topology optimization. The optimization problem supports (not necessarily unique) local minima. 

The topology optimization problem of Borrvall and Petersson is
\begin{align}
\min_{(\vect{u},\rho) \in H^1_{\vect{g},\text{div}}(\Omega)^d \times C_\gamma} J(\vect{u},\rho) :=\frac{1}{2} \int_\Omega \left(\alpha(\rho) |\vect{u}|^2 + \nu |\nabla \vect{u}|^2 - 2\vect{f} \cdot \vect{u}\right)  \text{d}x,\label{borrvallmin} \tag{BP}
\end{align}
where $\vect{u}$ denotes the velocity of the fluid, $\rho$ is the material distribution of the fluid and
\begin{align*}
H^1_{\vect{g}}(\Omega)^d &:= \{\vect{v} \in \underbrace{H^1(\Omega) \times \dots \times  H^1(\Omega)}_{d \; \text{times}}: \vect{v}|_{\partial \Omega} = \vect{g} \},\\
H^1_{\vect{g},\text{div}}(\Omega)^d &:= \{\vect{v} \in H^1_{\vect{g}}(\Omega)^d : \text{div}(\vect{v}) = 0 \; \text{a.e.\ in} \; \Omega \},\\
C_\gamma &:= \left\{ \eta \in L^\infty(\Omega) : 0 \leq \eta \leq 1 \; \text{a.e.}, \;\; \int_\Omega \eta \; \text{d}x \leq \gamma |\Omega|, \; \gamma \in (0,1) \right \}.
\end{align*}
In this work, $H^1(\Omega)$ denotes the Sobolev space $W^{1,2}(\Omega)$ and $L^\infty(\Omega)$ denotes the vector space of essentially bounded measurable functions equipped with the essential supremum norm. Furthermore, $\Omega \subset \mathbb{R}^d$ is a Lipschitz domain with dimension $d=2$ or $d=3$, $\vect{f} \in L^2(\Omega)^d$ is a body force and $\nu > 0$ is the (constant) viscosity. The restriction, $|_{\partial \Omega}$, is to be understood in the boundary trace sense. Moreover, the boundary data $\vect{g} \in H^{1/2}(\partial \Omega)^d$ and $\vect{g} = \vect{0} $ on $\Gamma \subset \partial \Omega$, with $\mathcal{H}^{d-1}(\Gamma)>0$, i.e.\ $\Gamma$ has nonzero Hausdorff measure on the boundary. Mixed boundary conditions are discussed in \cref{sec:neumann-double-pipe}. Here, $\alpha$ is the inverse permeability, modeling the influence of the material distribution on the flow. For values of $\rho$ close to one, $\alpha(\rho)$ is small, permitting fluid flow; for small values of $\rho$, $\alpha(\rho)$ is very large, restricting fluid flow. The function $\alpha$ satisfies the following properties:
\begin{enumerate}[label=({A}\arabic*)]
\item $\alpha: [0,1] \to [\underline{\alpha}, \overline{\alpha}]$ with $0 \leq \underline{\alpha} < \overline{\alpha} < \infty$;
\label{alpha1}
\item $\alpha$ is convex and monotonically decreasing;
\item $\alpha(0) = \overline{\alpha}$ and $\alpha(1) = \underline{\alpha}$,
\label{alpha3}
\end{enumerate}
generating a superposition operator also denoted $\alpha: C_\gamma \to L^\infty(\Omega; [\underline{\alpha},\overline{\alpha}])$. Typically, in the literature $\alpha$ takes the form \cite{Borrvall2003, Evgrafov2014}
\begin{align}
\alpha(\rho) = \bar{\alpha}\left( 1 - \frac{\rho(q+1)}{\rho+q}\right), \label{eq:alphachoice}
\end{align}
where $q>0$ is a penalty parameter, so that $\lim_{q \to \infty} \alpha(\rho) = \bar{\alpha}(1-\rho)$. The objective functional \cref{borrvallmin} can be interpreted as the total potential power of the flow. The first and second terms in the integral measure the energy lost by the flow through the porous medium and the energy lost due to viscous dissipation, respectively. The third term attempts to maximize the flow velocities at the applied body force. \cref{borrvallmin} is discussed in further detail by  Borrvall and Petersson \cite{Borrvall2003}.
\begin{remark}
The integral in \cref{borrvallmin} is well defined. {Indeed,} since $\alpha$ is assumed to be convex, it is {Borel measurable; also since} $\rho \in C_\gamma$ is Lebesgue measurable, the composition {$\alpha(\rho) : \Omega \to [\underline{\alpha}, \overline{\alpha}]$} is Lebesgue measurable. 
\end{remark}
\begin{theorem} 
\label{th:BPexistence}
\cite[Th.\ 3.1]{Borrvall2003} Suppose that $\Omega \subset \mathbb{R}^d$ is a Lipschitz domain, with $d=2$ or $d=3$ and $\alpha$ {satisfies properties \labelcref{alpha1}--\labelcref{alpha3}. Then there exists a pair} $(\vect{u}, \rho) \in H^1_{\vect{g},\mathrm{div}}(\Omega) \times C_\gamma$ that minimizes $J$, as defined in \cref{borrvallmin}. 
\end{theorem}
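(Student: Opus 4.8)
The plan is to apply the direct method of the calculus of variations. First I would check that the admissible set is nonempty and that $J$ is bounded below on it, so that the infimum is a finite real number. Nonemptiness of $C_\gamma$ is clear (take $\eta \equiv \gamma$), and nonemptiness of $H^1_{\vect{g},\mathrm{div}}(\Omega)^d$ follows from the existence of a solenoidal extension of the boundary datum $\vect{g}$; on any such pair $J$ is finite since $\alpha(\rho) \in L^\infty(\Omega)$, $|\vect{u}|^2 \in L^1(\Omega)$ and $\vect{f}\cdot\vect{u} \in L^1(\Omega)$. Because $\alpha \ge \underline{\alpha} \ge 0$, the term $\frac{1}{2}\int_\Omega \alpha(\rho)|\vect{u}|^2\,\mathrm{d}x$ is nonnegative; and since every admissible $\vect{u}$ vanishes on $\Gamma$ with $\mathcal{H}^{d-1}(\Gamma) > 0$, the Poincar\'e--Friedrichs inequality gives $\|\vect{u}\|_{H^1(\Omega)^d} \le C_\Omega \|\nabla\vect{u}\|_{L^2(\Omega)}$. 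Combining this with Cauchy--Schwarz and Young's inequality applied to the body-force term yields
\begin{equation*}
J(\vect{u},\rho) \ge \frac{\nu}{4}\|\nabla\vect{u}\|_{L^2(\Omega)}^2 - C'\|\vect{f}\|_{L^2(\Omega)^d}^2,
\end{equation*}
which is bounded below and coercive in $\|\nabla\vect{u}\|_{L^2(\Omega)}$.

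Next I would take a minimizing sequence $(\vect{u}_n,\rho_n)$. The coercivity estimate above forces $\{\vect{u}_n\}$ to be bounded in $H^1(\Omega)^d$, and $\{\rho_n\}$ is bounded in $L^\infty(\Omega)$ and hence in $L^2(\Omega)$; passing to a subsequence (not relabelled) we obtain $\vect{u}_n \rightharpoonup \vect{u}$ in $H^1(\Omega)^d$ and $\rho_n \rightharpoonup \rho$ in $L^2(\Omega)$. Feasibility of the weak limit is a closedness argument: $H^1_{\vect{g},\mathrm{div}}(\Omega)^d$ is a closed affine subspace of $H^1(\Omega)^d$, since the trace operator and the divergence are bounded and linear, hence weakly continuous; and $C_\gamma$ is convex and strongly closed in $L^2(\Omega)$, hence weakly closed. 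Thus $(\vect{u},\rho)$ is admissible.

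The crux is the weak lower semicontinuity $\liminf_n J(\vect{u}_n,\rho_n) \ge J(\vect{u},\rho)$. The viscous term $\frac{\nu}{2}\int_\Omega |\nabla\vect{u}|^2\,\mathrm{d}x$ is convex and strongly continuous on $H^1(\Omega)^d$, hence weakly lower semicontinuous, and the body-force term $-\int_\Omega \vect{f}\cdot\vect{u}\,\mathrm{d}x$ is weakly continuous. The genuinely difficult term is $\int_\Omega \alpha(\rho_n)|\vect{u}_n|^2\,\mathrm{d}x$, a product of two sequences that converge only weakly. To handle it I would invoke the compact Sobolev embedding $H^1(\Omega) \hookrightarrow\hookrightarrow L^4(\Omega)$, valid for $d = 2,3$, so that $\vect{u}_n \to \vect{u}$ strongly in $L^4(\Omega)^d$ and therefore $|\vect{u}_n|^2 \to |\vect{u}|^2$ strongly in $L^2(\Omega)$. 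Writing
\begin{equation*}
\int_\Omega \alpha(\rho_n)|\vect{u}_n|^2\,\mathrm{d}x = \int_\Omega \alpha(\rho_n)\bigl(|\vect{u}_n|^2 - |\vect{u}|^2\bigr)\,\mathrm{d}x + \int_\Omega \alpha(\rho_n)|\vect{u}|^2\,\mathrm{d}x,
\end{equation*}
the first integral tends to $0$ because $\alpha(\rho_n)$ is bounded in $L^2(\Omega)$ while $|\vect{u}_n|^2 - |\vect{u}|^2 \to 0$ in $L^2(\Omega)$; and in the second integral $|\vect{u}|^2 \ge 0$ is now fixed, so the convexity of $\alpha$ makes $\eta \mapsto \int_\Omega \alpha(\eta)|\vect{u}|^2\,\mathrm{d}x$ a convex functional on $L^2(\Omega)$ which is strongly continuous (by dominated convergence, using $0 \le \alpha \le \overline{\alpha}$), hence weakly lower semicontinuous; therefore its $\liminf$ is at least $\int_\Omega \alpha(\rho)|\vect{u}|^2\,\mathrm{d}x$. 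Assembling the three contributions gives $\liminf_n J(\vect{u}_n,\rho_n) \ge J(\vect{u},\rho)$, and since $(\vect{u}_n,\rho_n)$ is minimizing and $(\vect{u},\rho)$ is admissible, $(\vect{u},\rho)$ minimizes $J$. I expect the main obstacle to be exactly this nonconvex coupling term $\alpha(\rho)|\vect{u}|^2$: weak convergence of the two factors is not enough, and the argument succeeds only by upgrading the convergence of $\vect{u}_n$ through compactness and then exploiting the convexity of $\alpha$ for the remaining piece.
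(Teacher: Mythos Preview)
The paper does not actually prove this theorem; it merely quotes it from Borrvall and Petersson \cite[Th.\ 3.1]{Borrvall2003} and moves on. Your proposal via the direct method is correct and is essentially the argument given in the original reference: coercivity in $\vect{u}$ from the viscous term and Poincar\'e--Friedrichs, weak compactness of a minimizing sequence, weak closedness of the admissible sets, and weak lower semicontinuity obtained by upgrading $\vect{u}_n \to \vect{u}$ strongly in $L^4$ (Rellich--Kondrachov, $d\le 3$) so that the only remaining weakly converging factor sits inside the convex map $\eta \mapsto \int_\Omega \alpha(\eta)|\vect{u}|^2\,\mathrm{d}x$.

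Two minor points worth tightening. First, nonemptiness of $H^1_{\vect{g},\mathrm{div}}(\Omega)^d$ requires the compatibility condition $\int_{\partial\Omega} \vect{g}\cdot\vect{n}\,\mathrm{d}s = 0$, which you should state explicitly before invoking a solenoidal extension. Second, when you assert that $\eta \mapsto \int_\Omega \alpha(\eta)|\vect{u}|^2\,\mathrm{d}x$ is strongly continuous on $L^2$, note that $\alpha$ is only defined on $[0,1]$; the argument should be restricted to the convex, strongly closed set $\{0\le\eta\le 1\}\subset L^2(\Omega)$, on which convexity of $\alpha$ plus strong lower semicontinuity (via Fatou along a.e.\ convergent subsequences) already suffices for weak lower semicontinuity---you do not need full strong continuity, which would in addition require continuity of $\alpha$ at the endpoints.
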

Due to the lack of strict convexity in \cref{borrvallmin}, a minimizing pair is not necessarily unique.

\subsection{Construction of the barrier functional}
In this subsection we formulate a barrier functional with an enlarged feasible set that will be employed by our algorithm to find multiple solutions of the Borrvall--Petersson optimization problem. 

We first consider the volume constraint. This constraint is typically modeled as an inequality constraint. However, as we show below, this constraint is active at an optimal solution, and so we may also apply it as an equality constraint. To the best of our knowledge, the following result is novel.
\begin{proposition}
If the pair $(\vect{u}_*, \rho_*)$ is an isolated local or global minimizer of $J$ as defined in \cref{borrvallmin} and $\gamma <1$, then $\int_\Omega \rho_* \; \mathrm{d}x = \gamma|\Omega| $. 
\end{proposition}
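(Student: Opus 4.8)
The plan is to argue by contradiction. Since $\rho_* \in C_\gamma$ already forces $\int_\Omega \rho_*\,\mathrm{d}x \le \gamma|\Omega|$, the only alternative to the claim is that the volume constraint is strictly slack, $\int_\Omega \rho_*\,\mathrm{d}x < \gamma|\Omega|$; I will show this is incompatible with $(\vect{u}_*,\rho_*)$ being an \emph{isolated} minimizer by exhibiting a one-parameter family of feasible minimizers that is distinct from, but converges to, $(\vect{u}_*,\rho_*)$.

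First I would observe that $\rho_* \in C_\gamma$ together with $\gamma < 1$ prevents $\rho_* = 1$ almost everywhere, so $|\{x \in \Omega : \rho_*(x) < 1\}| > 0$; by continuity of measure there is $\varepsilon \in (0,1)$ with $|A_\varepsilon| > 0$, where $A_\varepsilon := \{x \in \Omega : \rho_*(x) \le 1-\varepsilon\}$. Put $\phi := \chi_{A_\varepsilon} \in L^\infty(\Omega)$, which is nonnegative. For $t \in (0,\varepsilon)$ one has $0 \le \rho_* + t\phi \le 1$ a.e., and, exploiting the strict slackness, $\int_\Omega(\rho_* + t\phi)\,\mathrm{d}x = \int_\Omega \rho_*\,\mathrm{d}x + t\,|A_\varepsilon| \le \gamma|\Omega|$ for all sufficiently small $t$. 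Hence $\rho_* + t\phi \in C_\gamma$, and since the velocity is left unchanged, $(\vect{u}_*, \rho_* + t\phi)$ is feasible for \cref{borrvallmin}.

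Next, only the term $\frac{1}{2}\int_\Omega \alpha(\rho)\,|\vect{u}|^2\,\mathrm{d}x$ of $J$ depends on $\rho$, and since $\alpha$ is monotonically decreasing and $t\phi \ge 0$ we get $\alpha(\rho_* + t\phi) \le \alpha(\rho_*)$ a.e., whence $J(\vect{u}_*, \rho_* + t\phi) \le J(\vect{u}_*, \rho_*)$. Combining this with the (local or global) minimality of $(\vect{u}_*, \rho_*)$ --- the competitor $(\vect{u}_*,\rho_* + t\phi)$ being feasible and, when $(\vect{u}_*,\rho_*)$ is only a local minimizer, also lying in the relevant neighborhood once $t$ is small --- forces $J(\vect{u}_*, \rho_* + t\phi) = J(\vect{u}_*, \rho_*)$, so each $(\vect{u}_*, \rho_* + t\phi)$ is itself a (local or global) minimizer. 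Since $\|\rho_* + t\phi - \rho_*\|_{L^\infty(\Omega)} = t \to 0$ as $t \downarrow 0$ while $\rho_* + t\phi \ne \rho_*$ (because $|A_\varepsilon| > 0$), these minimizers accumulate at $(\vect{u}_*,\rho_*)$ yet are distinct from it, contradicting isolatedness. Therefore $\int_\Omega \rho_*\,\mathrm{d}x = \gamma|\Omega|$.

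I expect the computation to be entirely routine; the one point needing care is the precise reading of \emph{isolated} minimizer. One should check that each perturbed pair $(\vect{u}_*, \rho_* + t\phi)$ indeed qualifies as a local minimizer --- this follows by shrinking the neighborhood on which $(\vect{u}_*,\rho_*)$ is minimal to one centered at the perturbed point --- and that isolatedness is understood in a topology no finer than the $L^\infty$ (equivalently $L^2$) topology on the density variable, so that the constructed family genuinely converges to $(\vect{u}_*,\rho_*)$. Once these conventions are fixed, the statement is immediate from the monotone decrease of $\alpha$ and the room afforded by a slack volume constraint.
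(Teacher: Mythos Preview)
Your proof is correct and follows essentially the same strategy as the paper: assume the volume constraint is slack, perturb $\rho_*$ upward while keeping $\vect{u}_*$ fixed, use the monotone decrease of $\alpha$ to show $J$ does not increase, and contradict isolatedness. The only cosmetic differences are that the paper uses the perturbation $\delta\rho = c(1-\rho_*)$ rather than your $t\chi_{\{\rho_*\le 1-\varepsilon\}}$, and that the paper reads ``isolated local minimizer'' as ``strict local minimizer'' (so the inequality $J(\vect{u}_*,\rho_*+\delta\rho)\le J(\vect{u}_*,\rho_*)$ is already the contradiction), whereas you take the extra step of showing the perturbed pairs are themselves minimizers accumulating at $(\vect{u}_*,\rho_*)$; under the paper's reading your argument simplifies, and under yours both arguments still go through.
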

\begin{proof}[Proof by contradiction] 
Suppose there exists a pair $(\vect{u}_*, \rho_*) \in H^1_{\vect{g},\text{div}}(\Omega)^d \times C_\gamma$ that is an isolated local or global minimizer of $J(\vect{u},\rho)$ such that $V:= \int_\Omega \rho_* \; \text{d}x < \gamma|\Omega|$.  By the definition of an isolated local minimizer, there exists an $r > 0$ such that for any $(\vect{v}, \eta)$ that satisfies,
\begin{align*}
\| \vect{u}_* - \vect{v}\|_{H^1(\Omega)} + \| \rho_* - \eta \|_{L^\infty(\Omega)} \leq r
\end{align*}
then $J(\vect{u}_*, \rho_*) < J(\vect{v}, \eta)$. Then for any function $\delta \rho \in C_\gamma$ such that 
\begin{align}
0 < \|  \delta \rho \|_{L^1(\Omega)} &\leq (\gamma |\Omega| - V), \label{eq:l1check}\\
0 < \|  \delta \rho \|_{L^\infty(\Omega)} &\leq r, \label{eq:lqcheck}\\
0 \leq \rho_* + \delta \rho &\leq 1, \label{eq:vicheck}
\end{align}
we have that $\rho_* + \delta \rho \in C_\gamma$ from \cref{eq:l1check} and \cref{eq:vicheck} and $\rho_* + \delta \rho$ lies in the $L^\infty $-$r$-neighborhood of $\rho_*$ from \cref{eq:lqcheck}. Such a $\delta \rho$ exists, for example,
\begin{align*}
\delta \rho = c(1 -\rho_*), \;\; \text{where} \;  c = \text{min}\left\{ \frac{r}{\| 1 - \rho_* \|_{L^\infty(\Omega)}}, \frac{\gamma |\Omega|- V}{|\Omega| - V} \right\}.
\end{align*}
We see that $c > 0$ since $r > 0$ and $V < \gamma |\Omega| < |\Omega|$. Furthermore $\delta \rho$ satisfies \cref{eq:l1check}--\cref{eq:vicheck} since, 
\begin{align*}
 \|  \delta \rho \|_{L^1(\Omega)}  &= c \int_\Omega (1-\rho_*) \text{d}x \leq c (|\Omega| - V) \leq \gamma |\Omega| - V,\\
  \|  \delta \rho \|_{L^\infty(\Omega)} &\leq c \| 1 - \rho_*\|_{L^\infty(\Omega)} \leq r,\\
  0 &\leq \rho_* + \delta \rho = \rho_* + c( 1 - \rho_*) \leq \rho_* + 1 - \rho_* \leq 1.
 \end{align*}
Since $\alpha(\cdot)$ is monotonically decreasing and $\rho_*$ and $\delta \rho$ are non-negative and not equal to zero, then $\alpha(\rho_* + \delta \rho ) \leq \alpha(\rho_*) $ a.e.\ and hence $J(\vect{u}_*, \rho_* + \delta \rho) \leq J(\vect{u}_*, \rho_*)$. 
\end{proof}
Given we can tighten the inequality volume constraint to an equality volume constraint, we now define the Lagrangian and the enlarged feasible-set barrier functional, respectively, as:
\begin{align}
L(\vect{u}, \rho, p, p_0, \lambda) &:= J(\vect{u},\rho) - \int_\Omega p \; \text{div}(\vect{u})  \text{d}x - \int_\Omega p_0 p \; \text{d}x -  \int_\Omega \lambda (\gamma - \rho) \text{d}x;
\label{eq:Lagrangian}\\
\begin{split}
L^{\epsilon_{\text{log}}}_\mu(\vect{u}, \rho, p, p_0, \lambda)& := L(\vect{u}, \rho, p, p_0, \lambda)\\
&\indent - \mu \int_\Omega (\log(-\epsilon_{\text{log}} + \rho) + \log(1+\epsilon_{\text{log}}-\rho)) \text{d}x, 
\end{split}
\label{eq:enlargedbarrier}
\end{align}
where $p \in L^2(\Omega)$ denotes the pressure, $\lambda$ is the Lagrange multiplier for the volume constraint, $p_0 \in \mathbb{R}$ is the Lagrange multiplier to fix the integral of the pressure, $0 \leq \epsilon_{\text{log}} \ll 1$ and $\mu \geq 0$, where $\mu$ is the barrier parameter.  

The classical barrier functional is given by $L^0_\mu$. The role of $ \epsilon_{\text{log}}$ is to enlarge the feasible region permitted by the barrier terms. In the deflated barrier method we do \emph{not} use the barrier terms to enforce the box-constraints on $\rho$, but rather to perform continuation in the barrier parameter to follow a central path. This provides robust convergence and offers an opportunity to find other solutions of the optimization problem, as explained in \cref{sec:alg}.

We note that the Euler--Lagrange equation of $J(\vect{u},\rho)$ with respect to $\vect{u}$ satisfies the generalized Stokes momentum equation formulated by Borrvall and Petersson \cite[Eq.\ 12]{Borrvall2003}. Hence, we are only required to enforce the incompressibility and volume constraints. In the case where we wish to minimize the power dissipation of a fluid flow governed by a generalized Navier--Stokes momentum equation, we are required to introduce three extra Lagrange multipliers, as done in \cref{sec:fiveholesdoublepipe}.

\subsection{Topology optimization of the compliance of elastic structures}
\label{sec:compliance}
A significant portion of the topology optimization literature focuses on minimizing the compliance of a structure, such as a Messerschmitt--B\"olkow--Blohm (MBB) beam or a cantilever. Compliance problems involve finding the optimal topology of a structure obeying a volume constraint within a specified domain that minimizes the displacement of the structure under a body or boundary force. For simplicity we consider structures that obey linear elasticity. The optimization problem we consider is posed as follows,
\begin{align}
\min_{(\vect{u}, \rho) \in H^1_{\Gamma_D}(\Omega)^d \times C_\gamma} J(\vect{u},\rho) := \int_{\Gamma_N} \vect{f} \cdot \vect{u} \; \text{d}s \label{complianceopt} \tag{C}
\end{align}
such that,
\begin{align*}
-\text{div}\left( \sigma \right) &= 0 && \text{in} \; \Omega,\\
\sigma &= k(\rho)\left[2 \mu_l  \varepsilon(\vect{u}) + \lambda_l \text{tr}(\varepsilon(\vect{u})) \mathbb{I} \right]&& \text{in} \; \Omega,\\ 
\sigma \vect{n} & = \vect{f} \; \text{on} \; \Gamma_N, \; \; 0 \leq \rho \leq 1 \; \text{a.e.\ in} \; \Omega, \;\; \text{and} \;\;
\int_\Omega \rho \; \text{d}x = \gamma |\Omega| ,
\end{align*}
where, $H^1_{\Gamma_D}(\Omega)^d := \left \{\vect{v} \in H^1(\Omega)^d : \vect{v}|_{\Gamma_D} = \vect{0} \right\}$, $|_{\Gamma_D}$ is understood in the boundary trace sense, $\vect{u} = \vect{u}(\rho)$ denotes the displacement of the structure, $\sigma$ denotes the stress tensor, the traction $\vect{f} \in H^{1/2}(\Gamma_N)^d$ is given, $\Gamma_N , \Gamma_D \subset \partial \Omega$ are known boundaries on $\partial \Omega$ such that $\Gamma_N \cup \Gamma_D = \partial \Omega$, $\mu_l$ and $\lambda_l$ are the Lam\'e coefficients, $\text{tr}(\cdot)$ is the matrix-trace operator, $\mathbb{I}$ is the $d \times d$ identity matrix, $\vect{n}$ is the outward normal and 
\begin{align*}
\varepsilon(\vect{u}) = \frac{1}{2}( \nabla \vect{u} + \nabla \vect{u}^\top), \;\;\;
k(\rho) = \epsilon_{\text{SIMP}}+ (1-\epsilon_{\text{SIMP}}) \rho^{p_s},
\end{align*} 
where $0< \epsilon_{\text{SIMP}} \ll1$ and $p_s \geq 1$. Unless stated otherwise, we choose $\epsilon_{\text{SIMP}} = 10^{-5}$ and $p_s=3$. The use of $k(\rho)$ is known as the Solid Isotropic Material with Penalization (SIMP) model. Bends\o e and Sigmund \cite[Ch.\ 1]{Bendsoe2004} provide a concise physical interpretation of the SIMP model. In essence, for $\rho$ close to one, $k(\rho)$ is close to one, indicating the presence of material, whereas where $\rho$ is close to zero, $k(\rho)$ approaches $\epsilon_{\text{SIMP}}$, indicating void. Thus, $k$ is the reverse of the inverse permeability, $\alpha$. It is typical to raise $\rho$ to the power of $p_s>1$ in order to penalize intermediate values of $\rho$. 

We introduce a Lagrange multiplier $\vect{v} \in H^1_{\Gamma_D}(\Omega)^d$ and reformulate \cref{complianceopt} as finding the stationary points $(\vect{u}, \rho, \vect{v})$ of
\begin{align}
\begin{split}
\int_{\Gamma_N} \vect{f} \cdot \vect{u}\; \text{d}s 
+ \int_\Omega k(\rho) \left[ 2 \mu_l \varepsilon(\vect{u}) : \varepsilon(\vect{v})+ \lambda_l \text{tr}(\varepsilon(\vect{u})) \cdot \text{tr}(\varepsilon(\vect{v}))  \right] \text{d}x  
- \int_{\Gamma_N} \vect{f}\cdot \vect{v} \; \text{d}s
\end{split} \label{eq:compliance1}
\end{align}
such that $ 0 \leq \rho \leq 1$ a.e.\ in $\Omega$,  and
$\int_\Omega \rho \; \text{d}x = \gamma |\Omega|$.

By deriving the Euler--Lagrange equations of \cref{eq:compliance1}, we see that the linear elasticity PDE constraint on $\vect{u}$ must be satisfied. However, if we consider the adjoint equation involving $\vect{v}$, it can be verified that $\vect{v} = -\vect{u}$. Substituting this relation into \cref{eq:compliance1}, we see that \cref{eq:compliance1} is equivalent to finding the stationary points of
\begin{align}
\begin{split}
2\int_{\Gamma_N} \vect{f} \cdot \vect{u}\; \text{d}s 
- \int_\Omega k(\rho) \left[ 2 \mu_l \varepsilon(\vect{u}) : \varepsilon(\vect{u})+ \lambda_l \text{tr}(\varepsilon(\vect{u})) \cdot \text{tr}(\varepsilon(\vect{u}))  \right] \text{d}x 
\end{split} \label{eq:compliance2}
\end{align}
such that $0 \leq \rho \leq 1$ a.e.\ in $\Omega$, and
$\int_\Omega \rho \; \text{d}x = \gamma |\Omega|$.
The substitution is useful as it greatly reduces the size of the problem after discretization. 

Unfortunately, the problem in general is ill-posed and does not have minimizers in the continuous setting. Na\"ive attempts at finding minimizers often yield checkerboard patterns of $\rho$. Although a different choice of finite element spaces may avoid the checkerboarding, the solutions will still be mesh-dependent. As the mesh is refined, the beams of the solutions will become ever thinner, leading to nonphysical solutions in the limit. There are several schemes employed by the topology optimization community to obtain physically reasonable solutions for $\rho$ and they are known as \textit{restriction methods} \cite{Bendsoe2004}. We opt for the addition of a Ginzburg--Landau energy term,
 \begin{align*}
J_{\mathrm{GL}}(\vect{u},\rho) := J(\vect{u},\rho) + \frac{\beta \epsilon}{2}\int_\Omega |\nabla \rho|^2 \; \text{d}x + \frac{\beta}{2 \epsilon} \int_\Omega \rho(1-\rho) \text{d}x,
 \end{align*}
with $0 < \beta \ll1$, $0 <\epsilon \ll 1$, to the objective function. $J_{\mathrm{GL}}$ requires $\rho$ to be weakly differentiable. Hence we now seek a solution $\rho \in C_\gamma \cap H^1(\Omega)$. Physically, the Ginzburg--Landau term corresponds to penalizing  fluctuations in the values of $\rho$. As $\epsilon \to 0$, it was shown by Modica \cite{Modica1987} that the Ginzburg--Landau energy $\Gamma$-converges to the perimeter functional associated with restricting $\rho(x) \in \{0,1\}$, providing rigorous mathematical grounding for this choice of regularization. For sufficiently large values of $\beta$, this introduces minima and removes the checkerboarding effect. Other restriction methods used by the topology optimization community include gradient control \cite{Borrvall2001a}, perimeter constraints \cite{Borrvall2001a}, sensitivity filtering \cite{Bourdin2001, Sigmund1994}, design filtering \cite{Bruns2001, Lazarov2011} and regularized penalty \cite{Borrvall2001a}. 

After these manipulations, the Lagrangian is given by
\begin{align*}
L(\vect{u},\rho, \lambda) &:= 2\int_{\Gamma_N} \vect{f} \cdot \vect{u}\; \text{d}s 
- \int_\Omega k(\rho) \left[ 2 \mu_l \varepsilon(\vect{u}) : \varepsilon(\vect{u})+ \lambda_l \text{tr}(\varepsilon(\vect{u})) \cdot \text{tr}(\varepsilon(\vect{u}))  \right] \text{d}x \\
& \indent + \frac{\beta \epsilon}{2} \int_\Omega |\nabla \rho|^2 \text{d}x + \frac{\beta}{2 \epsilon} \int_\Omega \rho(1-\rho) \text{d}x -  \int_\Omega \lambda (\gamma - \rho) \text{d}x,
\end{align*}
where $\lambda \in \mathbb{R}$ is the Lagrange multiplier for the equality volume constraint. We then define the enlarged feasible-set barrier functional as in  \cref{eq:enlargedbarrier}.

We have formulated enlarged feasible-set barrier functionals for both Borrvall--Petersson and structural compliance optimization problems. Finding stationary points of these barrier functionals is equivalent to computing minima, maxima and saddle points of the underlying optimization problems. In the next section we will introduce our algorithm and explain how we obtain multiple stationary points.

\section{The deflated barrier method}
\label{sec:alg}
In the following sections, we describe the components of the deflated barrier method. More specifically, we justify the usage of a barrier method where the subproblems are solved with a primal-dual active set solver to handle the effects of the barrier parameter in the Hessian. This is in contrast to a direct application of a discretize-then-optimize (DTO) primal-dual interior method, which does not use the structure of the original infinite-dimensional optimization problem. In the context of PDE-constrained optimization, ignoring the problem structure often results in mesh-dependence of the solver. Mesh-dependence is the phenomenon whereby with each refinement of the mesh, the number of iterations required by the optimization algorithm increases in an unbounded way \cite{Schwedes2017}.
\subsection{Choosing a solver for the subproblems}
Approximately solving the first order conditions of ${L^0_\mu}$ as $\mu \to 0$ is the classical primal interior point approach to finding the minima of \cref{borrvallmin} and \cref{complianceopt}. Without additional care, a direct implementation results in the following poor numerical behavior: 
\begin{enumerate}[label=({B}\arabic*)]
\setlength\itemsep{0em}
\item The Hessian of $L^0_{\mu_k}(\vect{z})$ has condition number $\mathcal{O}(1/\mu_k)$. Hence as $\mu$ decreases, the computed Newton updates may become inaccurate and require more solver time \cite[Th.\ 4.2]{Forsgren2002};
\label{illcondition} 
\item An initial guess of $\vect{z}_* = \vect{z}_{k}$ for the subproblem $\mu = \mu_{k+1}$ is asymptotically infeasible if an exact full Newton update of the primal interior point method is used. More precisely, if $\delta\rho^0_{k+1}$ is the calculated Newton update for $\rho$ at the first iteration of the Newton solver at $\mu = \mu_{k+1}$, then as $\mu \to 0$, we see that $0 \leq \rho_{k} + \delta \rho^0_{k+1} \leq 1$ a.e.\ does not hold \cite[Sec.\ 4.3.3]{Forsgren2002}.
\label{infeasiblenewton}
\end{enumerate}
\vspace{2mm}
Typically, to avoid the poor numerical behavior of \labelcref{illcondition} and \labelcref{infeasiblenewton}, the DTO primal interior point method is reformulated as a primal-dual interior point method, eliminating the rational expressions. Since the problem is first discretized, the slack variables associated with box constraints are associated to the primal variable component-wise. This manifests as a block identity matrix within the full Hessian. The Hessian can then be reduced and the primal-dual approach is reformulated into a condensed form. 

It is well known that PDE-constrained optimization solvers suffer from mesh-dependence when they do not properly treat the structure of the underlying infinite-dimensional problem \cite{Schwedes2017}. In order to obtain accurate solutions, where it is clear if the material distribution indicates material or void, we may require several refinements of the mesh; in this context, it is clear that mesh-dependence would be particularly disadvantageous. The mesh-independence of our algorithm will be carefully studied in the subsequent numerical examples, and analyzed in future work.

In order to properly treat the structure of the underlying infinite-dimensional problem, we opt for an optimize-then-discretize (OTD) method. The full Hessian arising from an OTD primal-dual interior point method is no longer easily reduced, since the block associated with the slack variables is now a mass matrix, rather than the identity. To avoid solving uncondensed large systems involving three times the number of degrees of freedom of a primal approach, the goal is to develop an OTD barrier method that avoids the poor numerical behavior of \labelcref{illcondition} and \labelcref{infeasiblenewton}. In a novel approach, we achieve this by solving the subproblems arising from the first order conditions of the enlarged feasible-set barrier functional $L^{\epsilon_\text{log}}_\mu$, while still enforcing the true box constraints, $0 \leq \rho \leq 1$ a.e., with a primal-dual active set solver. Whereas in a standard barrier method, the barrier terms act as a replacement for the box constraints on $\rho$, \textit{here we retain the box constraints to be handled by the primal-dual active set solver}. The barrier terms are instead used for continuation of the problem, to aid global convergence and to search for other branches of solutions.

The two inner solvers we consider are Hinterm\"uller et al.'s \textit{primal-dual active set strategy} (HIK) \cite{HintermullerIto2003} and Benson and Munson's \textit{active-set reduced space strategy} (BM) \cite{Benson2003}. We briefly illustrate the basic approach taken to solve the individual  subproblems using the log-barrier approach coupled with a primal-dual active set solver.  Let $J: \mathbb{R}^n \to \mathbb{R}$ be a twice-continuously differentiable function and consider the following box-constrained nonlinear program:
\begin{align}
\min_{z \in \mathbb{R}^n} J(z) \;\; \text{subject to} \;\; a \leq z \leq b. \label{boxconstrainedopt}
\end{align}
Here, we assume that $a, b \in \mathbb R^n$ such that $a < b$ (in each component) and we understand the inequality constraints $a \le z \le b$ component-wise.  Next, we formulate an `outer approximation' of \cref{boxconstrainedopt} using enlarged feasible-set log-barrier terms (for any $\mu, \epsilon_{\text{log}} > 0$):
\begin{align*}
\min_{z \in \mathbb{R}^n}
\left\{J(z) - \mu  \sum_{i=1}^n[\log(z_i - (a_i-\epsilon_{\text{log}}))+ \log((b_i +\epsilon_{\text{log}})- z_i)] : a \leq z \leq b \right\}  .
\end{align*}
We emphasize that there are two pairs of box constraints: the true box constraints $[a,b]$ and the enlarged feasible-set box constraints $[a-\epsilon_{\text{log}}, b+\epsilon_{\text{log}}]$, $\epsilon_{\text{log}}>0$. For any fixed $\mu>0$, the associated KKT-system has the form
\begin{align}
F(z) - \lambda^a + \lambda^b &= 0, \label{eq:kkt4}\\
\lambda^a, \; \lambda^b&\geq 0,\label{eq:kkt5}\\
z-a \geq 0, \; b-z&\geq 0,\label{eq:kkt6} \\
\langle \lambda^a, z-a \rangle_{(\mathbb{R}^n)^*, \mathbb{R}^n} = \langle \lambda^b, b-z\rangle_{(\mathbb{R}^n)^*, \mathbb{R}^n}  & = 0, \label{eq:kkt7}
\end{align}
where, $\lambda^a, \lambda^b \in (\mathbb{R}^n)^*$ are Lagrange multipliers associated with the true box constraints and
\begin{align}
F(z):=J'(z) -\frac{\mu}{z - (a-\epsilon_{\text{log}})}+ \frac{\mu}{b+\epsilon_{\text{log}} - z},
\end{align}
where the rational expressions are interpreted component-wise. The equivalent mixed complementarity problem is given by 
\begin{alignat}{2}
\text{either} &\;\; a_i < z_i < b_i \;\; &&\text{and} \;\; F(z)_i = 0, \label{eq:mcp1}\\
\text{or} &\;\; a_i = z_i \;\; &&\text{and} \;\; F(z)_i \geq 0,\\
\text{or} &\;\; z_i = b_i \;\; &&\text{and} \;\; F(z)_i \leq 0. \label{eq:mcp3}
\end{alignat}
Consider the natural residual function $\varphi(x,y) = x - (x-y)_+$ where $(\cdot)_+:=\max(\cdot, 0)$. This is an example of an NCP function, a class of functions that for $x, y \in \mathbb{R}$ satisfy
\begin{align}
\varphi(x,y) = 0 \;\; \text{if and only if} \;\; x,y \geq 0, \;\; xy = 0.
\end{align}
Using $\varphi$, we note that \cref{eq:kkt4}--\cref{eq:kkt7} can be reformulated as the following:
\begin{align}
F(z) - \lambda^a + \lambda^b &= 0, \label{eq:kkt1}\\
\varphi(\lambda^a, z-a) = \lambda^a - (\lambda^a - (z - a))_+&= 0,\label{eq:kkt2}\\
\varphi(\lambda^b, b-z) = \lambda^b - (\lambda^b - (b - z))_+&= 0.\label{eq:kkt3}
\end{align}
Assuming we are given a strictly enlarged-set feasible iterate $z \in \mathbb{R}^n$, $a - \epsilon_{\text{log}} < z < b+\epsilon_{\text{log}}$, we linearize around the point $(z,\lambda^a,\lambda^b)$ using the associated Newton-derivative and reduce the system based on the estimates of the active and inactive sets predicted by the semismooth Newton step. 

In HIK, the linearized system in the direction of $(\delta z,\delta\lambda^a,\delta\lambda^b)$ is given by 
\begin{align}
\begin{split}
F'(z) \delta z - \delta \lambda^a + \delta \lambda^b 
=-F(z) + \lambda^a - \lambda^b,
\label{eq:HIKstep}
\end{split}
\end{align}
where $F'(z) \in \mathbb{R}^{n \times n}$ denotes the Fr\'echet derivative of $F$ and 
\begin{align}
z_i + \delta z_i &= a_i \quad &&\text{ if } i \in \mathfrak{A}^a =\{i:\lambda^a_i - z_i + a_i > 0\}, \label{eq:activelower}\\
z_i + \delta z_i &= b_i \quad &&\text{ if }  i \in \mathfrak{A}^b =\{i:\lambda^b_i - b_i + z_i > 0\} \label{eq:activehigher},\\
\lambda^a_i + \delta \lambda^a_i &= 0 \quad &&\text{ if }  i \in \mathfrak{I}^a =\{i:\lambda^a_i - z_i + a_i \leq 0\},\\
\lambda^b_i + \delta \lambda^b_i &= 0 \quad &&\text{ if }  i \in \mathfrak{I}^b =\{i:\lambda^b_i - b_i + z_i \leq 0\}. \label{eq:inactivehigher}
\end{align}
We define the active set by  $\mathfrak{A} = \mathfrak{A}^a \cup \mathfrak{A}^b$ and the inactive set by $\mathfrak{I} = \mathfrak{I}^a \cap \mathfrak{I}^b$. By substituting \cref{eq:activelower}--\cref{eq:inactivehigher} into \cref{eq:HIKstep} and removing the rows associated with the active set, we observe that
\begin{align}
F'(z)_{\mathfrak{I},\mathfrak{I}}\delta z_\mathfrak{I}
=- F'(z)_{\mathfrak{I}, \mathfrak{A}} \delta z_{\mathfrak{A}}-F(z)_{\mathfrak{I}}. \label{eq:reducedHIK}
\end{align}
We can therefore solve the reduced linear system \cref{eq:reducedHIK} to find the remaining unknown components of $\delta z$. 

BM attempts to solve \cref{eq:mcp1}--\cref{eq:mcp3} as follows. Given a feasible iterate $z$ with respect to the true box constraints, $a \leq z \leq b$, the active set is defined by
\begin{align}
\mathcal{A} = \{i: z_i=a_i \; \text{and} \; F(z)_i > 0 \} \cup \{i: z_i=b_i \; \text{and} \; F(z)_i <  0 \}, 
\end{align}
and the inactive set is given by $\mathcal{I} = \{i\}_{i=1}^n\backslash \mathcal{A}$. The linearized system in the direction of $\delta z$ takes the form
\begin{align}
F'(z)_{\mathcal{I},\mathcal{I}}\delta z_\mathcal{I} = -F(z)_{\mathcal{I}} \;\; \text{and} \;\; \delta z_{\mathcal{A}} = 0. \label{eq:reducedBM}
\end{align}
The next iterate is then given by $\pi(z+\delta z)$, where $\pi$ is the component-wise projection onto the true box constraints, i.e.\ 
\begin{align}
\pi(z+\delta z)_i = 
\begin{cases}
a_i & \text{if} \; z_i+\delta z_i < a_i,\\
 z_i+\delta z_i&\text{if} \; a_i \leq z_i+\delta z_i \leq b_i,\\
 b_i & \text{if} \; z_i+\delta z_i > b_i.
 \end{cases}
\end{align}
The HIK solver is a well-established method and under suitable assumptions is equivalent to a semismooth Newton method \cite{Qi1993, Qi1993b, Ulbrich2003} in both finite and infinite-dimensions \cite{HintermullerIto2003}. This equivalence ensures local superlinear convergence and under further assumptions guarantees mesh-independence \cite{Hintermuller2004}. Until now, the BM solver had no supporting theoretical results, although is conveniently included in PETSc \cite{petsc}. Experimentally, we observe that the BM solver enjoys superlinear convergence. At first glance, the two solvers may appear quite different, but in \cref{sec:BMsolver} we prove that for a linear elliptic control problem, if the active and inactive sets coincide between the two algorithms, then the updates given by HIK and BM are identical.

One common critique of barrier methods is that the step size rules for the update of the distributed control go to zero. We observe this in numerical examples if we use a Newton solver; however, this issue is averted when using HIK or BM. A step size of one is always taken for the update of the primal variable's active set, whereas a linesearch can be used for the update of the primal variable's inactive set. Hence, areas of the domain where the control attains the constraint do not influence the step sizes of the updates for sections of the control which are strictly feasible.

Both HIK and BM perform a pointwise projection on the iterates generated by the subproblems of the barrier functional. In the context of a classical OTD primal-dual interior point method applied to a PDE-constrained optimal control problem, under certain assumptions, Ulbrich and Ulbrich \cite{Ulbrich2000, Ulbrich2009} prove that local superlinear convergence holds if the iterates of the control and its associated Lagrange multipliers are pointwise projected to a controlled neighborhood of the central path. Although not all their assumptions hold in our case (in particular these problems are not convex), the combination of a primal-dual active set solver and barrier method mimics the computation of a Newton step of a primal-dual approach and then performing a pointwise projection. An advantage of our method is that our pointwise projection is unique and cheap to compute.
 
Numerically, this method only requires solving linear systems that are less than or equal to the size of the linear systems in a standard barrier method. Moreover, in the BM solver, the constrained variables can never reach the bounds of the enlarged feasible-set, ensuring the Hessian remains bounded. Furthermore, both the BM and HIK solvers remove the rows and columns in the Hessian associated with the active constraints. It is these active constraints which are the source of the unbounded eigenvalues that cause the ill-conditioning of the barrier method as $\mu$ approaches zero.  In \cref{fig:conditionnumber} we give an example demonstrating that the condition number is controlled by the elimination of the active set. Removing rows and columns associated with the active set mimics the principle of Nash et al.'s \textit{stabilized barrier method} \cite{Nash1994, Nash1993}.   

\subsection{Deflation}
Deflation is an algorithm for the calculation of \textit{multiple} solutions of systems of nonlinear equations from the same initial guess. Let $V$ and $W$ be Banach spaces. Suppose a system of PDEs, $F(z) = 0$, $F: V \to W$ has multiple solutions $z = z_1, \dots z_n$, that we wish to find. We find the first solution by utilizing a Newton-like algorithm to find $z_1$. Now instead of using a standard multistart approach which may converge to the same solution, we instead introduce a modified system $G(z) = 0$ such that:
\begin{enumerate}
\setlength\itemsep{0em}
\item $G(z) = 0$ if and only if $F(z) = 0$ for $z \neq z_1$;
\item A Newton-like solver starting from any initial guess $z_* \neq z_1$ applied to $G$ will not converge to $z_1$. 
\end{enumerate}
\begin{figure}[ht]
\centering
\subfloat[Before deflation.]{\includegraphics[width = 0.4\textwidth]{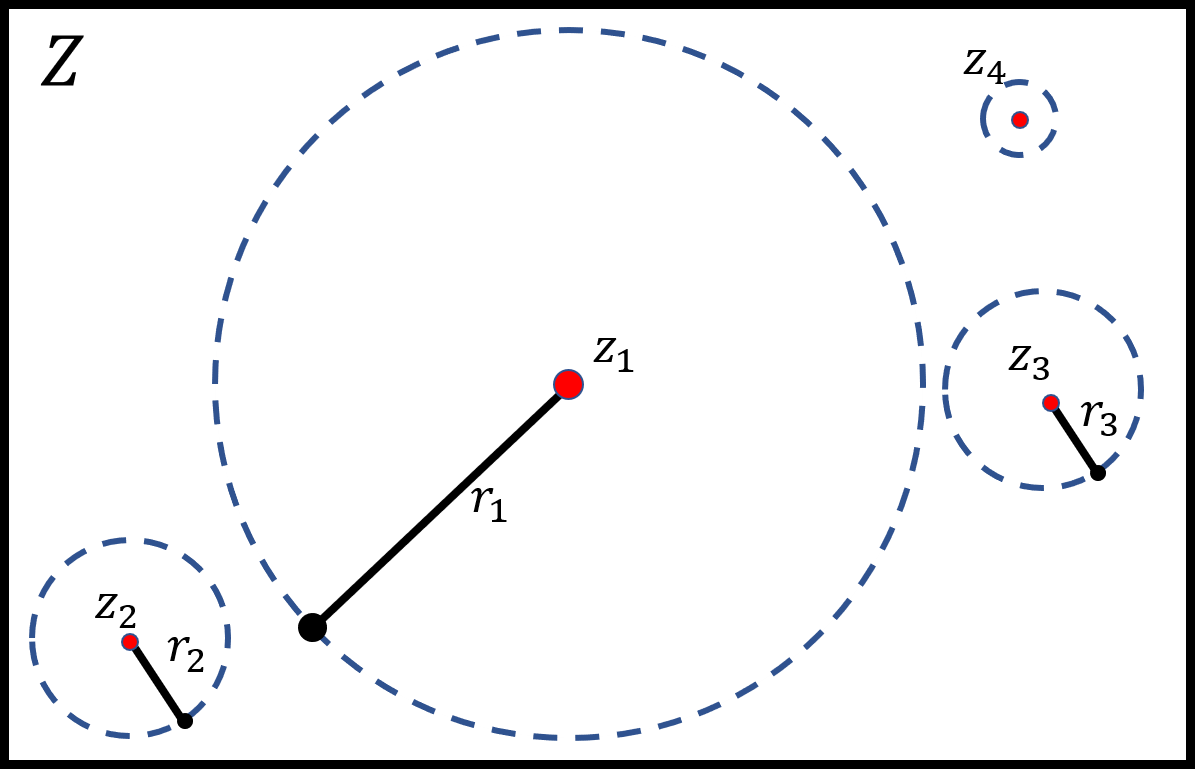}} \;\;
\subfloat[After the deflation of $z_1$.]{\includegraphics[width = 0.4\textwidth]{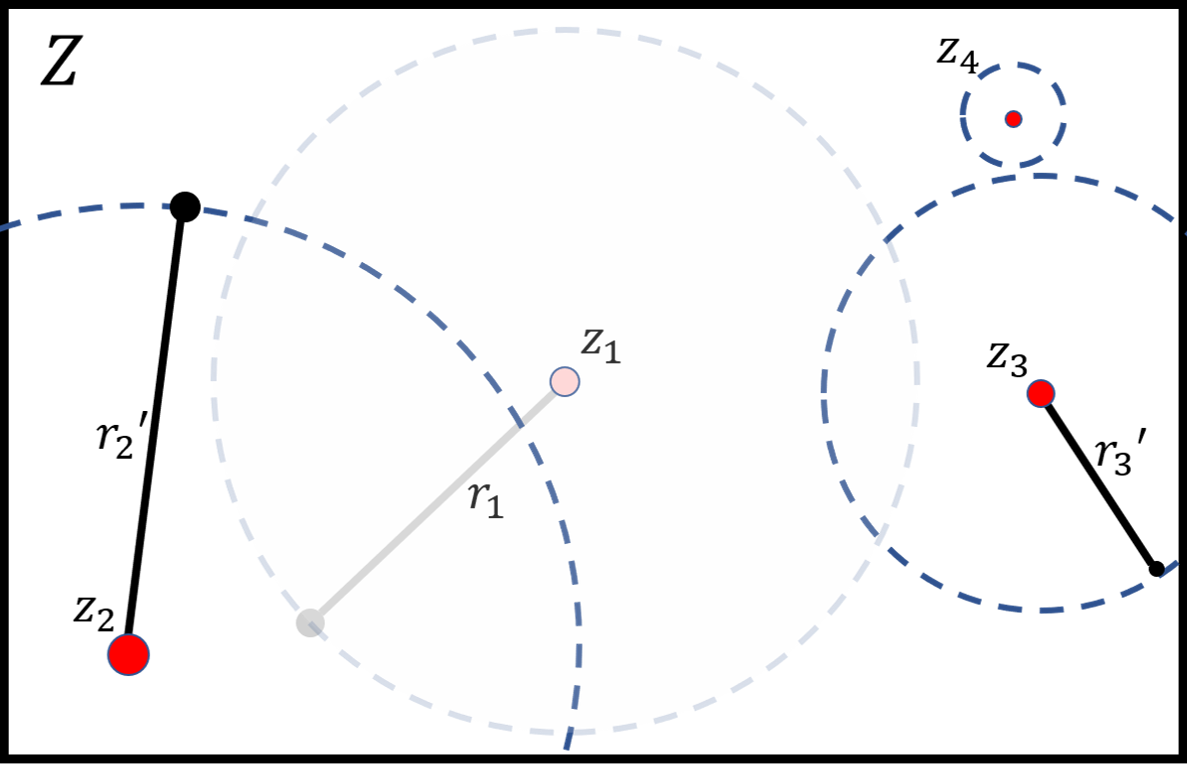}}
\caption{The solutions $z_1, z_2, z_3$ and, $z_4$ are zeros of the system $F(z)$. The circles around the solutions represent the basins of attraction within which a Newton-like solver converges to that particular solution.}\label{fig:deflation}
\end{figure}
This process is visualized in \cref{fig:deflation}. In principle, one can use the same initial guess to converge to multiple solutions. The modified system is obtained by applying a \textit{deflation operator}, $\mathcal{M}(z; z_1):W \to W$ to $F$ such that:
\begin{enumerate}[label = {(D\arabic*)}]
\item $\mathcal{M}(z; z_1)$ is invertible for all $z\neq z_1$ in a neighborhood of $z_1$; \label{deflation1}
\item $\liminf_{z\to z_1}\| \mathcal{M}(z; z_1) F(z) \| > 0$. \label{deflation2}
\end{enumerate}
\labelcref{deflation1} ensures that the resulting system has a solution if the original problem has an unknown solution, and \labelcref{deflation2} ensures that a Newton-like method applied to the newly deflated system does not converge as  $z \to z_1$. In this work we consider the shifted deflation operator $\mathcal{M}(z;z_1) = (\| z - z_1 \|^{-2}_{V} + 1)\mathcal{I}$, where $\mathcal{I} : W \to W$ is the identity operator \cite{Farrell2015}. In particular, in all the numerical examples discussed in \cref{sec:numerical}, deflation is implemented with respect to the material distribution, i.e.~$\mathcal{M}(\vect{z};\vect{z}_1) = (\| \rho - \rho_1 \|^{-2}_{L^2(\Omega)} + 1)\mathcal{I}$, where $\vect{z} = (\vect{u}, \rho, p, p_0, \lambda)$ and $\vect{z} = (\vect{u}, \rho, \lambda)$ in fluid and compliance problems, respectively. 

Deflation can be implemented very efficiently. In particular, the conditioning of the Jacobian of the deflated system does not cause computational difficulty, since the Newton update of the discrete deflated system is expressed as a scaling of the Newton update of the original discrete undeflated system via the Sherman--Morrison formula \cite[Sec.\ 3]{Farrell2015}. Let $F_h: V_h \to W_h$ be an approximation to $F$ on the finite-dimensional spaces $V_h$ and $W_h$. Let $\delta z_h$ denote the solution of the deflated Newton system evaluated at $z_h \in V_h$, to be computed, and let $\delta y_h$ denote the solution of the \emph{undeflated} Newton system of $F_h$, assembled at the same current iterate $z_h$. Let $\vectt{z}$, $\delta \vectt{z}$, and $\delta \vectt{y}$ be the discrete coefficient vectors of $z_h$, $\delta z_h$, and $\delta y_h$, respectively. Moreover, let $m(\vectt{z}) = \mathcal{M}(z_h, z_{1,h})$ and denote the derivative of $m$ with respect to $\vectt{z}$ by $m'(\vectt{z})$. The solution of the discrete deflated Newton system can be computed by scaling $\delta \vectt{y}$ \cite[Sec.\ 3]{Farrell2015}:
\begin{align} 
\delta \vectt{z}  = \left( 1  + \frac{m^{-1} (m')^\top (\delta \vectt{y})}{1 - m^{-1} (m')^\top (\delta \vectt{y})} \right) \delta \vectt{y}. \label{eq:deflationscaling}
\end{align}
The same formula applies if multiple solutions have been deflated, i.e.~if $m(\vectt{z}) = \mathcal{M}(z_h, z_{1,h}) \cdots \mathcal{M}(z_h, z_{n,h})$ for $n > 1$. The simple structure of \eqref{eq:deflationscaling}
arises because the deflated residual is a (nonlinear) scalar multiple of the original residual.

In summary, in order to compute the update $\delta \vectt{z}$ for the discretized deflated system, only the original, discretized, undeflated system is solved. Its solution $\delta \vectt{y}$ is then scaled as in \cref{eq:deflationscaling}.

Deflation was first introduced in the context of polynomials by Wilkinson \cite{Wilkinson1963}. It was then extended to differentiable finite-dimensional maps $F:\mathbb{R}^n \to \mathbb{R}^n$ by Brown and Gearhart \cite{Brown1971}. More recently, Farrell et al.\ extended the original Brown and Gearhart technique to Fr\'echet-differentiable maps between Banach spaces \cite{Farrell2015}. Deflation has been used to discover multiple solutions of cholesteric liquid crystals, Bose--Einstein condensates, mechanical metamaterials, aircraft stiffeners, and other applications \cite{charalampidis2016,Emerson2018,medina2019a,Robinson2017,xia2019a}. It has also been extended to semismooth mappings \cite{Farrell2019}, which is necessary in the current context of topology optimization. 
 
\subsection{Implementation of the deflated barrier method} \label{sec:dab}
The essential idea is to use deflation to attempt to find other branches during the continuation of the barrier parameter, as visualized in \cref{fig:visualization}. As summarized in \cref{fig:flowchart}, the deflated barrier method is divided into three phases: prediction, continuation and deflation. 
\newline \textbf{Prediction:} Given a solution $z_{k-1}$ at $\mu = \mu_{k-1}$, the algorithm calculates an initial guess for the corresponding solution at $\mu = \mu_k < \mu_{k-1}$. This is done via a feasible tangent prediction method (as described in \cref{sec:tangentpredictiontask}), a classical tangent prediction method \cite[Sec.\ 4.4.1]{Seydel2010} or a secant prediction method \cite[Sec.\ 4.4.2]{Seydel2010}. A feasible tangent prediction method is identical to its classical counterpart but with box constraints on the predictor step to ensure the initial guess is feasible.  
\newline \textbf{Continuation:} Given an initial guess for each branch at the new barrier parameter $\mu_k$, the algorithm calculates the new solution along each branch with a primal-dual active set solver whilst deflating away all solutions already known at $\mu = \mu_k$.
\newline \textbf{Deflation:} At some subset of the continuation steps, the algorithm searches for new branches at $\mu = \mu_k$ using solutions on different branches found at $\mu  = \mu_{k-1}$ as initial guesses. The search terminates when all the initial guesses have been exhausted (reached a maximum number of iterations without converging) or when a certain number of branches $\beta_{\mathrm{max}}$ have been found.

We now explain the notation used in \cref{alg:dbm}. Let $\vect{z}=(\vect{u},\rho,p,p_0,\lambda)$ in the Borrvall--Petersson case and $\vect{z}= (\vect{u},\rho,\lambda)$ in the compliance case. The value of the barrier parameter at subproblem iteration $k$ is denoted $\mu_k$. The initial guess for the density is denoted $\rho_0$ and the initial guess for the volume constraint Lagrange multiplier is denoted $\lambda_0$. The generator for the next value of $\mu$ is denoted by $\Theta$. The $\mu$-update can be adaptive or chosen a priori, provided it gives a strictly decreasing sequence. Under suitable conditions, the first order conditions of $L^{\epsilon_{\text{log}}}_\mu(\vect{z})$ together with the box constraints on $\rho$ can be reformulated into perturbed KKT conditions \cite[Rem.\ 3]{Ulbrich2009} which in turn can be reformulated as a semismooth system of partial differential equations, $F_\mu(\vect{z})$. Let 
\begin{align}
\vect{y} = \begin{cases}
(\vect{u}, p, p_0) & \text{in the Borrvall--Petersson case},\\
\vect{u}              & \text{in the compliance case}.
\end{cases}
\end{align}
Let $'|_{\vect{z}_i}$ denote the Fr\'echet derivative with respect to $\vect{z}_i$.
Let $\mathcal{S}_{\mu_k}$ denote the set of solutions, $\{\vect{z}\}_i$, found at $\mu_k$. Let $\mathcal{M}(\cdot)$ denote the deflation operator and $Z$ denote the function space of $\vect{z}$.
\begin{algorithm}[ht]
\caption{Deflated barrier algorithm}
\label{alg:dbm}
\begin{algorithmic}[1]
\Initialize{
$k \gets 0$ \Comment{Initial iteration number}\\
$\mu_0$ \Comment{Initial barrier parameter}\\ 
tol \Comment{Approximate solve tolerance}\\
 $\beta_{\mathrm{max}}$ \Comment{Maximum number of branches sought}\\
$\rho_0(x) \gets \gamma$ \Comment{Constant initial material distribution} \\
$\lambda_0$ \Comment{Initial volume constraint multiplier}\\
\vspace{2mm}
}
\State{Approximately solve $(L^{\epsilon_{\text{log}}}_{\mu_0})'|_{\vect{y}}(\vect{y}, \rho_0) = 0$.}\Comment{Solve state equation for $\vect{y}$}
\State{$\vect{z}_* \gets (\vect{y}, \rho_0, \lambda_0)$} \Comment{Initial guess}
\State{Approximately solve $F_{\mu_0}(\vect{z}) = 0$ with initial guess $\vect{z}_*$.}
\State{$S_{\mu_0} \gets S_{\mu_0} \cup \{\vect{z}\}$} \Comment{Include solution in solution set }
\State{$\mu_1 \gets \Theta(\mu_0)$, $k \gets 1$} \Comment{Update $\mu$ and $k$}
\While{$\mu_k \ge 0$ and $|\mathcal{S}_{\mu_{k-1}} | \neq \varnothing$}
\For{$\vect{z}_i \in \mathcal{S}_{\mu_{k-1}}$}
\State{}\Comment{\textbf{Prediction}}
\State{Predict solution at $\mu_k$, denoted $\vect{z}_*$.}
\State{} \Comment{\textbf{Continuation}}
\State{Attempt to solve $\mathcal{M}\left(\mathcal{S}_{\mu_k}\right)  F_{\mu_k}(\vect{z}) = 0$ with initial guess $\vect{z}_*$.}
\If{$\|F_{\mu_k}(\vect{z}) \|_{Z^*} \leq \mathrm{tol}$}
\State{Solve has succeeded; set $S_{\mu_k} \gets S_{\mu_k} \cup \{\vect{z}\}$.}
\EndIf
\EndFor
\State{}\Comment{\textbf{Deflation}}
\For{$\vect{z}_j \in \mathcal{S}_{\mu_{k-1}}$}
\If{$|\mathcal{S}_{\mu_k}| \ge \beta_{\mathrm{max}}$}
\State{\textbf{break}}
\EndIf
\State{Attempt to solve $\mathcal{M}\left(\mathcal{S}_{\mu_k}\right) F_{\mu_k}(\vect{z}) = 0$ with initial guess $\vect{z}_j$.}
\If{$\|F_{\mu_k}(\vect{z}) \|_{Z^*} \leq \mathrm{tol}$}
\State{Solve has succeeded; set $S_{\mu_k} \gets S_{\mu_k} \cup \{\vect{z}\}$.}
\EndIf
\EndFor
\State{$\mu_{k+1} \gets  \Theta(\mu_k)$} \Comment{Choose new value of $\mu$}
\State{$k \gets k + 1$}
\EndWhile
\end{algorithmic}
\end{algorithm}
\begin{figure}[ht]
\centering
\includegraphics[width = 0.8\textwidth]{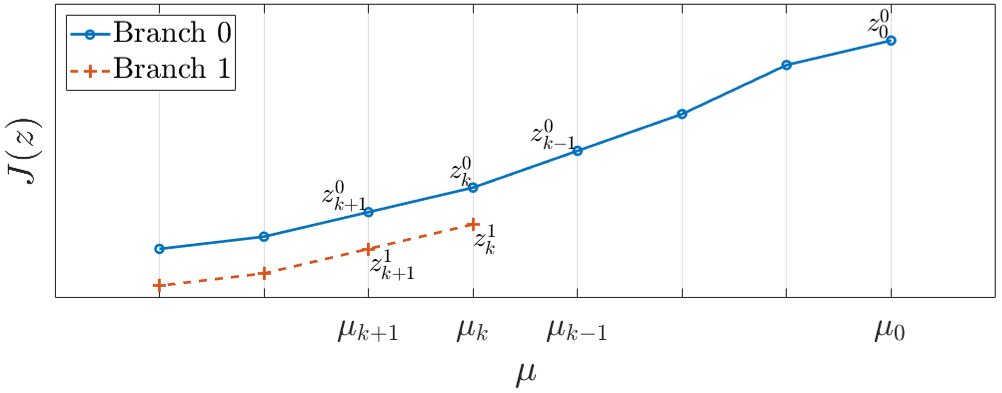}
\caption{A visualization of the deflated barrier method. Branch 0 is discovered at $\mu_0$. A predictor-corrector scheme is used to to follow the branch as $\mu$ decreases, denoted by circles. At $\mu = \mu_k$, deflation is used to discover a new solution on a different branch (branch 1), using the solution on branch 0 at $\mu = \mu_{k-1}$ as an initial guess. This newly discovered branch is then also continued as $\mu$ decreases, and is denoted by the crosses.} \label{fig:visualization}
\end{figure}
\begin{figure}[ht]
\centering
\includegraphics[width =0.8 \textwidth]{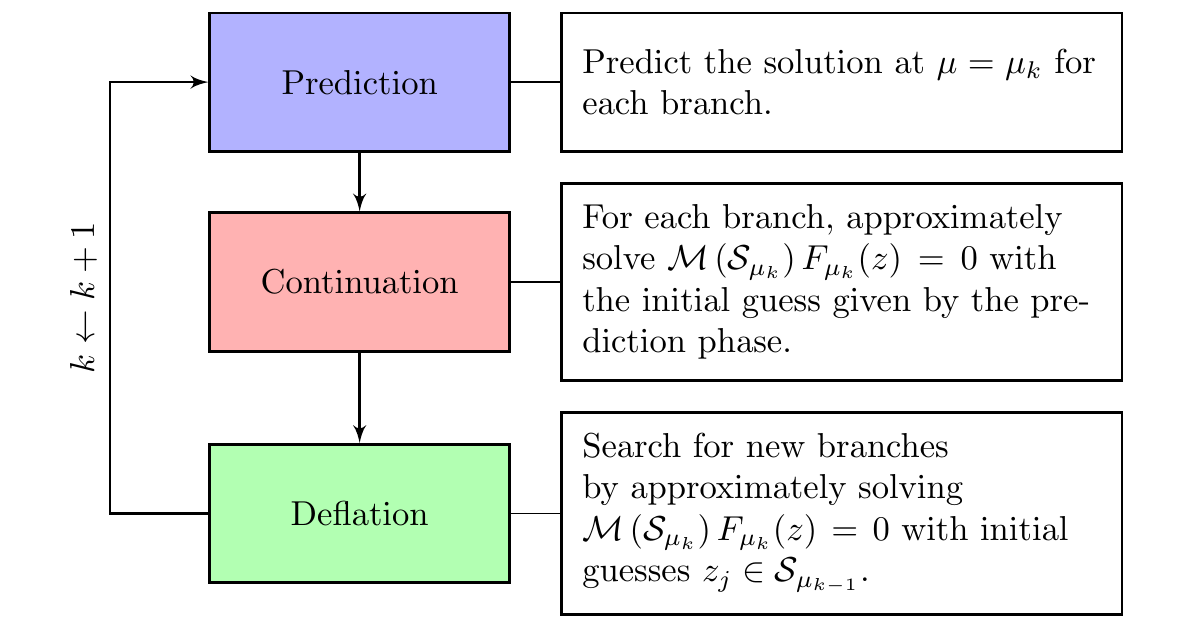}
\caption{A flowchart depicting the three phases involved in the deflated barrier method.} \label{fig:flowchart}
\end{figure}
\newpage
\section{Numerical results}
\label{sec:numerical}
In all examples the systems were discretized with the finite element method using FEniCS \cite{fenics} and the resulting linear systems were solved by a sparse LU factorization with MUMPS \cite{mumps} and PETSc \cite{petsc}. The meshes were either created in FEniCS or Gmsh \cite{Geuzaine2009}. We present three different examples of the minimization of the power dissipation of a fluid constrained by the Stokes equations, one constrained by the Navier--Stokes equations, and two examples of the minimization of the compliance constrained by linear elasticity. Throughout the numerical examples, $h_\text{min}$ denotes the minimum diameter of all simplices in the mesh, where the simplex diameter is defined as the maximum edge length. Similarly  $h_\text{max}$ denotes the maximum diameter of all simplices in the mesh. All solutions depicted are presented as computed by the deflated barrier method, with no truncation or postprocessing of the material distribution.

\subsection{Borrvall--Petersson double-pipe} 
\label{sec:sec:double-pipe}
We consider the double-pipe problem with volume fraction $\gamma = 1/3$, two prescribed flow inputs and two prescribed outputs, and the boundary conditions as prescribed in \cref{fig:doublepipe}. We use $\alpha$ as given in \cref{eq:alphachoice}, with $ \overline{\alpha} = 2.5 \times 10^4$ and $q = 1/10$. Here $q$ is a penalty parameter which controls the level of intermediate values (between zero or one) in the optimal design. 

We use a Taylor--Hood $(\mathrm{CG}_2)^2 \times \mathrm{CG}_1$ finite element discretization for the velocity and pressure and $\mathrm{CG}_1$ elements for the material distribution. For BM, we begin with $\mu_0 = 100$ and apply deflation immediately to find the second branch of solutions. For HIK, this strategy did not converge to the second branch, although the second branch is discovered with $\mu_0 = 105$. In both cases tangent prediction is used, as well as a damped $l^2$-minimizing linesearch \cite[Alg.\ 2]{Brune2015}.
\cref{fig:doublepipesolns} shows the minimizers of the double-pipe problem computed using the deflated barrier method.

\begin{figure}[ht]
\centering
\includegraphics[width = 0.6\textwidth]{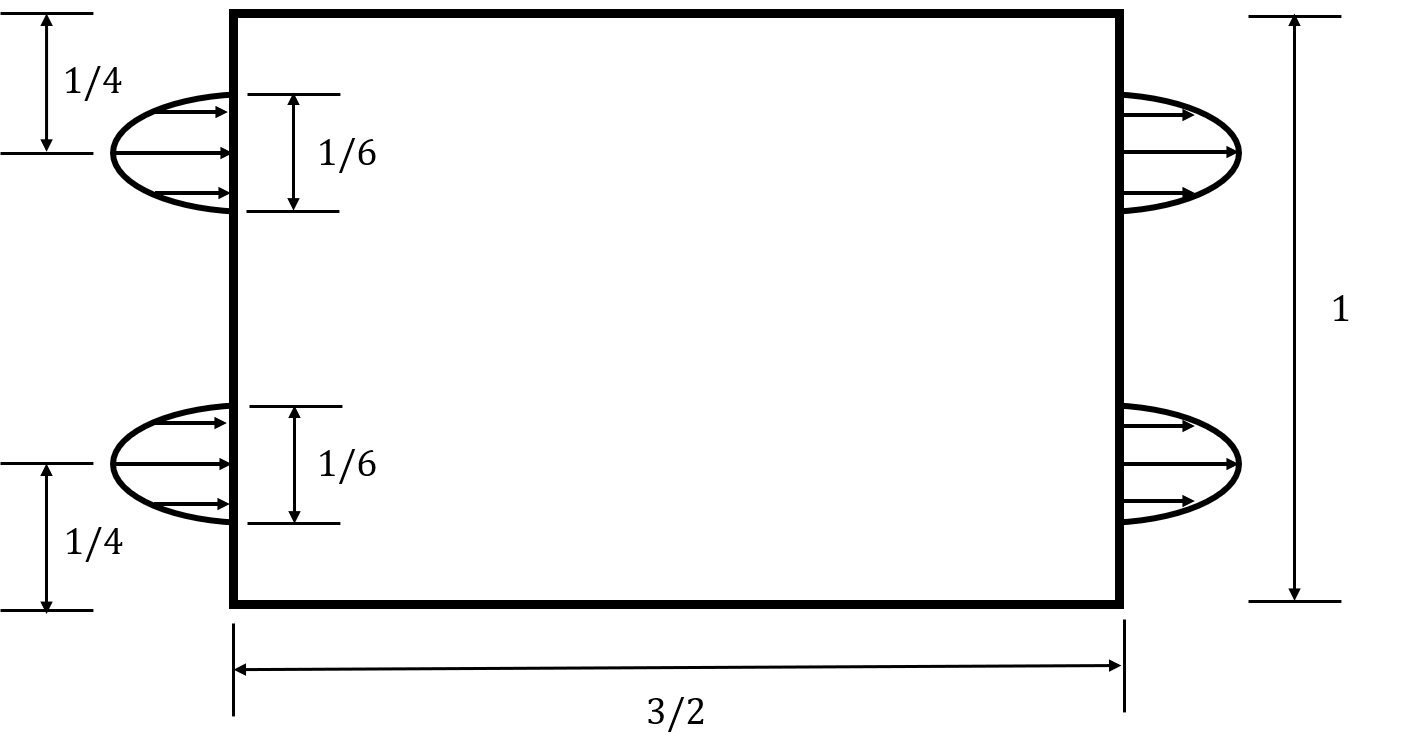}
\caption{Setup of the double-pipe problem. In our tests we pick $\vect{f} =(0,0)^\top$ and $\nu = 1$. The Dirichlet boundary conditions on the velocity are $\vect{u} = \left(1-144(y-3/4)^2, 0)\right)^\top$ for the top input and output boundary flows, $\vect{u} = \left(1-144(y-1/4)^2, 0)\right)^\top$ for the bottom input and output boundary flows and $\vect{u} = (0,0)^\top$ everywhere else.} 
\label{fig:doublepipe}
\end{figure}
\begin{figure}[ht]
\centering
\includegraphics[width = 0.49\textwidth]{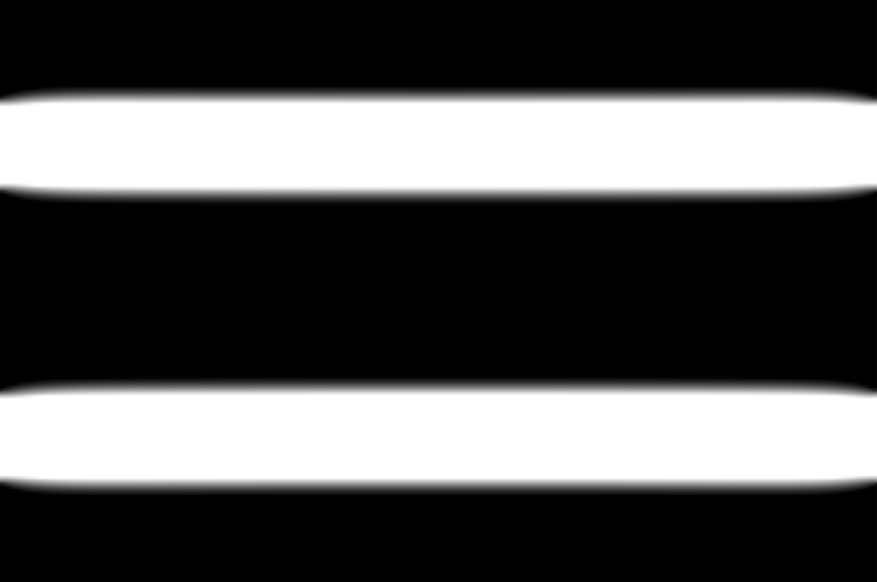}
\includegraphics[width = 0.49\textwidth]{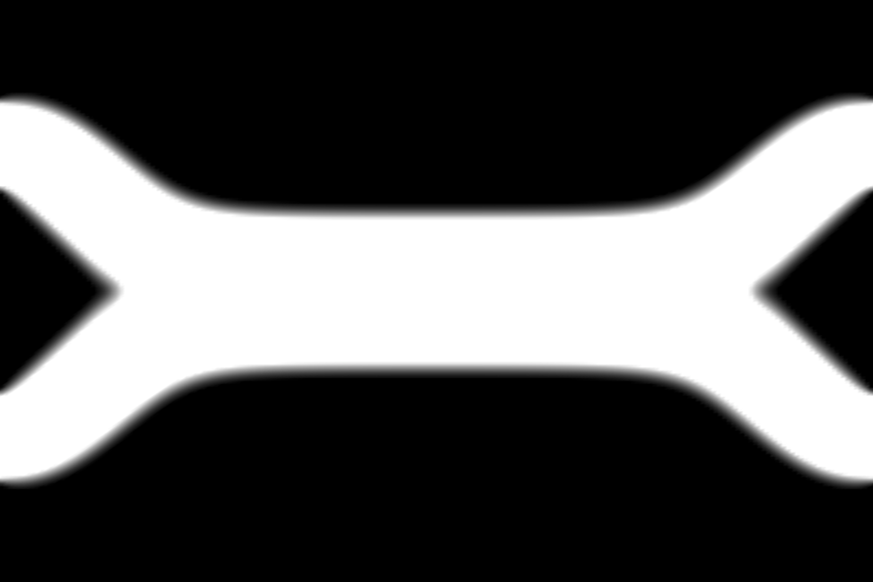}
\caption{The material distribution of the local (left) and global (right) minimizer of the double-pipe optimization problem with mesh size $h = 0.0141$. Black corresponds to a value of $\rho = 0$ and white corresponds to a value of $\rho = 1$. The objective functional values are $J = 32.58$ (left) and $J = 23.87$ (right).} 
\label{fig:doublepipesolns}
\end{figure}

In \cref{tab:doublepipeiters} we explore the mesh-independence of primal-dual active set solver iterations. 
We observe that with each refinement of the mesh, the number of iterations stay roughly constant. In particular, we notice that the behavior is consistent for both HIK and BM. This is a recurring theme and holds in subsequent examples.
To exemplify that the mesh-independence is not an artifact of our choice of finite element spaces, we also display the results of a divergence-free Scott--Vogelius $(\mathrm{CG}_2)^2 \times \mathrm{DG}_1$ finite element discretization for the velocity and pressure and $\mathrm{CG}_1$ for the material distribution. Stability of this discretization is ensured by using a barycentrically-refined mesh \cite{qin1994}.

In \cref{fig:conditionnumber} we plot the condition number of the Hessian as in a classical barrier method, and the condition number of the Hessian with the rows and columns associated with the active-set removed. We observe that the condition number of the latter is significantly smaller, accounting for why our proposed methodology does not suffer from ill-conditioning.
\begin{table}[ht]
\footnotesize
\centering
\begin{tabular}{ll|lll|lll}
BM Solver & Taylor--Hood& &Branch 0 &  &   & Branch 1 &  \\ \toprule
$h$&Dofs&Cont. & Defl. & Pred. & Cont. & Defl. & Pred. \\ \midrule
 0.0283& 38,256 &124 		& 0 		     & 22		  & 115 			 & 30  	 & 22   \\
 0.0177& 97,206 &	123 & 0		     &	22	  & 	109		&	30	 & 22 \\
 0.0141&  151,506&110		& 0		      &22		 &116			&29	 &22  \\ \bottomrule \vspace{1mm}
&&&&&&&\\
HIK solver & Taylor--Hood & & Branch 0 &  &   & Branch 1 &  \\ \toprule
$h$& Dofs &Cont. & Defl. & Pred. & Cont. & Defl. & Pred. \\ \midrule
 0.0283& 38,256 & 174 		& 0 		     & 43		  & 261			 & 14 	 & 43   \\
 0.0177& 97,206 & 189 		& 0 		     & 43		  & 223			 & 13 	 & 43   \\
 0.0141&  151,506&	173	& 		   0   &	43	 &		197	& 	13 & 43 \\ \bottomrule\vspace{1mm}
&&&&&&&\\
BM solver & Scott--Vogelius& & Branch 0 &  &   & Branch 1 &  \\ \toprule
$h_{\text{min}}$/$h_{\text{max}}$& Dofs &Cont. & Defl. & Pred. & Cont. & Defl. & Pred. \\ \midrule
 0.0278/0.0501& 58,685 & 155 		& 0 		     & 22		  & 139			 & 29 	 & 22   \\
 0.0139/0.0250& 234,005 & 124		& 0 		     & 22		  & 120			 & 29 	 & 22   \\
 \bottomrule
\end{tabular}
\caption{The cumulative total numbers of primal-dual active-set solver iterations required in the continuation, deflation and prediction phases of the double-pipe problem. Branch 0 discovers the local minimum shown in \cref{fig:doublepipesolns} and branch 1 discovers the global minimum. As we can see, the numbers of iterations stay roughly constant for both solvers as we refine the mesh.} 
\label{tab:doublepipeiters}
\end{table}

\begin{figure}[ht]
\centering
\includegraphics[width = 0.49\textwidth]{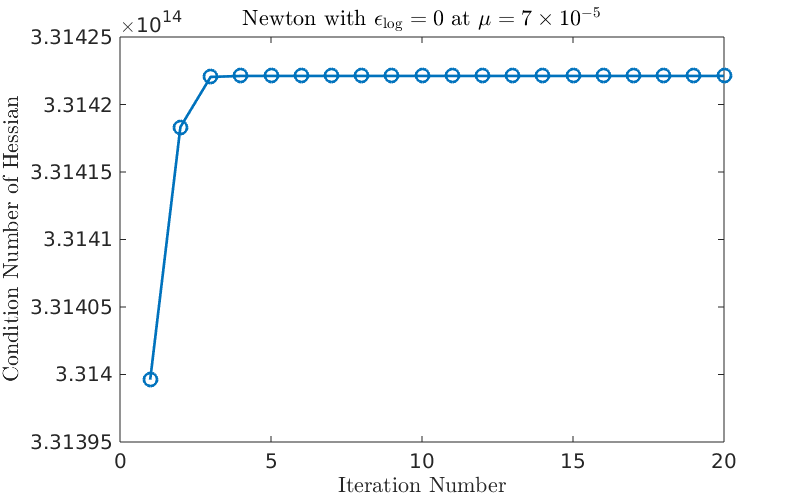}
\includegraphics[width = 0.49\textwidth]{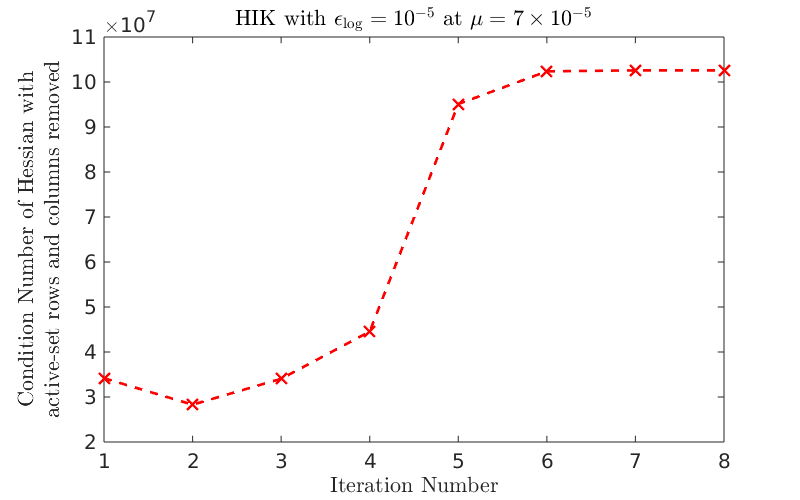}
\caption{The condition number of the Hessian at each iteration of the solver in the subproblem with $\mu = 7 \times 10^{-5}$. The condition number of the Hessian of $L^0_\mu$ arising in the linear systems of a standard Newton solver (left) is six to seven orders of magnitude larger than the condition number of the Hessian of $L^{\epsilon_{\text{log}}}_\mu$ arising in the linear systems of the HIK solver (right).} 
\label{fig:conditionnumber}
\end{figure}

\subsection{Neumann-outlet double-pipe} \label{sec:neumann-double-pipe}
One could argue that fixing the outlet flows is inherently nonphysical and a more realistic model would prescribe natural boundary conditions on the outlets (while keeping the Dirichlet boundary conditions on the inlets) \cite{Deng2018}. The correct choice of Neumann boundary conditions is nontrivial. Heywood et al.~\cite{Heywood1996} provide an investigation into various formulations. We opt for the natural boundary condition,
\begin{align}
\left(-p \mathbb{I} + 2\nu \varepsilon(\vect{u}) \right) \vect{n} = \vect{0} \; \text{on} \; \Gamma_N, \label{eq:neumannbcs}
\end{align}
where $\varepsilon(\vect{u}) := (\nabla \vect{u} + (\nabla \vect{u})^\top)/2$ denotes the symmetrized gradient, $\mathbb{I}$ denotes the $d \times d$ identity matrix and $\Gamma_N \subset \partial \Omega$ denotes the outlets. Heywood et al.~\cite{Heywood1996} note that such a formulation does not support Poiseuille flow. However, Limache et al.~\cite{Limache2007} proved that \cref{eq:neumannbcs} does satisfy the principle of objectivity, which is often violated by other common formulations, including $\left(-p \mathbb{I} + \nu \nabla \vect{u} \right) \vect{n} = \vect{0}$. The natural boundary condition \cref{eq:neumannbcs} is achieved by altering the objective functional to
\begin{align}
J_N(\vect{u},\rho) = \frac{1}{2} \int_\Omega \alpha(\rho) |\vect{u}|^2 + 2\nu |\varepsilon(\vect{u})|^2 \; \text{d}x. 
\label{eq:neumannfunctional}
\end{align}
Since $\text{div}((\nabla \vect{u})^\top) = \nabla (\text{div}(\vect{u}))$ and $\text{div}(\vect{u}) = 0$, we note that the minimizers of \cref{eq:neumannfunctional} are the same as those of the original functional, $J$, combined with the natural boundary conditions as described in \cref{eq:neumannbcs}. The other alteration in the optimization problem is the removal of the Lagrange multiplier, $p_0$, since the absolute pressure level is set by the outflow boundary condition.

We employ the Taylor--Hood discretization and initialize $\mu_0 = 1000$. Deflation finds the second, third and fourth branches at $\mu = 82.4$. For $h = 0.0333$, deflation discovers branch 2, then branch 1 and 3, whereas for the other mesh sizes, deflation discovers the branches in ascending order.

The removal of an imposed outlet flow has an interesting effect. The global minimizer in the shape of a double-ended wrench is now a local minimizer. Two new $\mathbb{Z}_2$-symmetric global minimizers now exist as shown in \cref{fig:neumannminima}. This is not entirely surprising. There is a cost associated with the pipe splitting and if the optimization problem does not require the flow to leave both outlets, then it is favorable for the flow to exit via one outlet, not both. This is reflected in the resulting cost.

The mesh-independence of the algorithm is investigated in \cref{tab:doublepipeneumanniters}. As before, mesh-independence is observed.

\begin{figure}[ht]
\centering
\includegraphics[width = 0.24\textwidth]{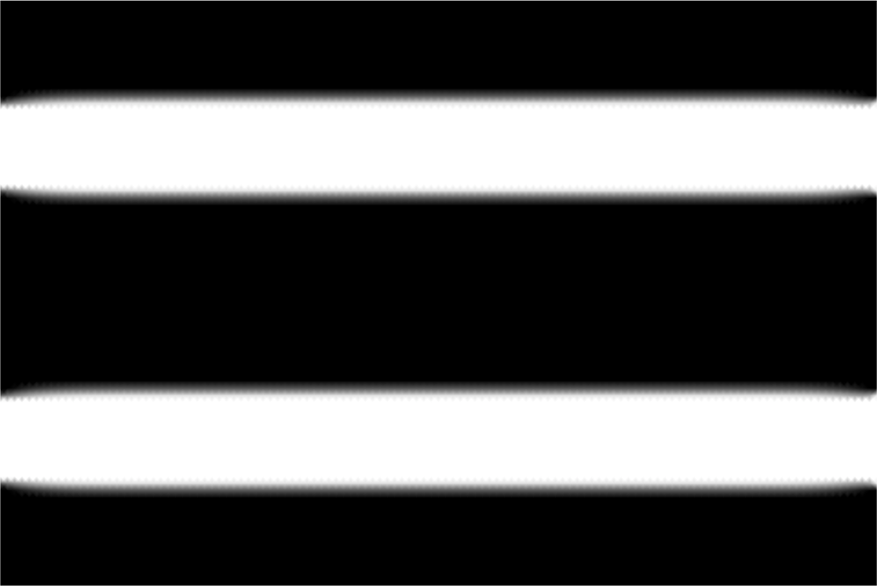}
\includegraphics[width = 0.24\textwidth]{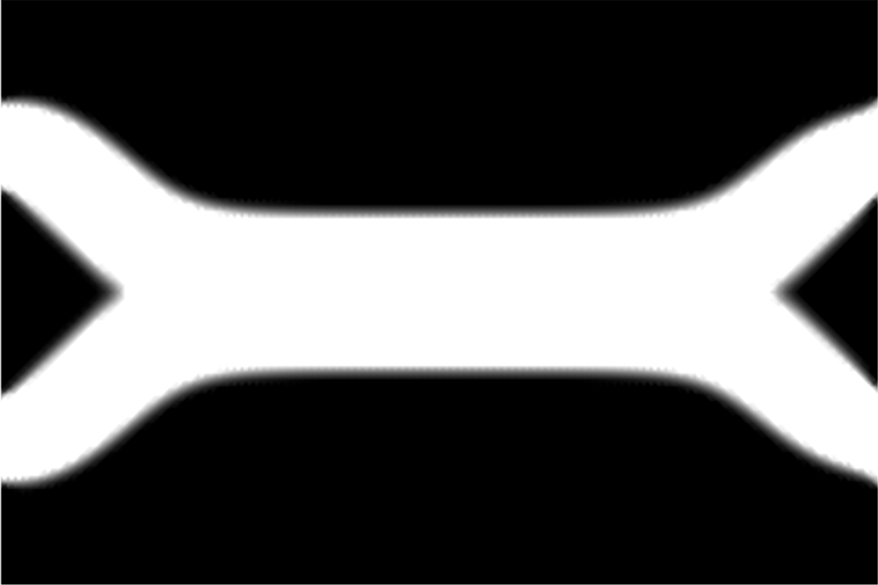}
\includegraphics[width = 0.24\textwidth]{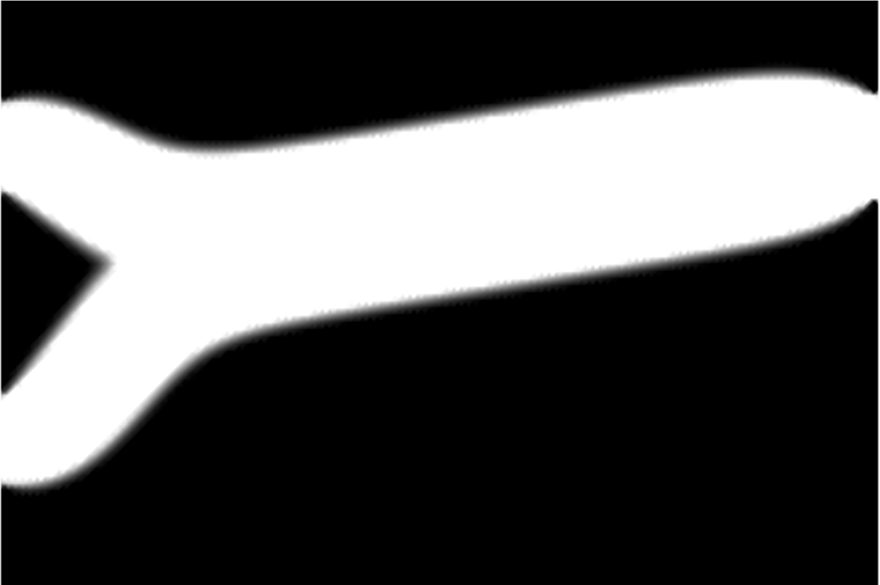}
\includegraphics[width = 0.24\textwidth]{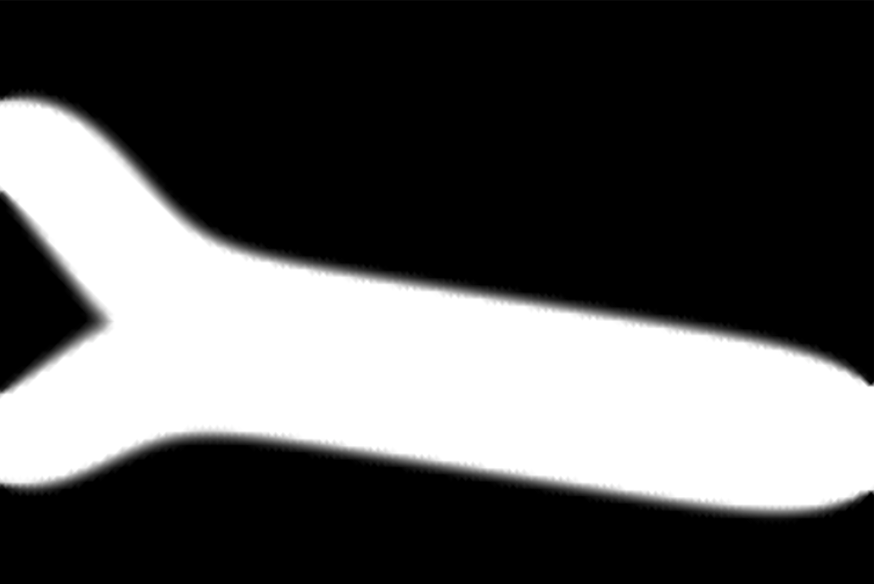}
\caption{The material distribution of two local and two global minimizers of the double-pipe optimization problem with natural boundary conditions on the outlets, instead of Dirichlet conditions, with $h = 0.0125$. Black corresponds to a value of $\rho = 0$ and white corresponds to a value of $\rho = 1$.  From left to right the objective functional values are $J_N = 32.35$, $22.92$, $18.46$, and $18.46$.}
\label{fig:neumannminima}
\end{figure}

\begin{table}[ht]
\small
\begin{center}
\begin{tabular}{ll|lll|lll}
BM Solver & & &Branch 0 &  &   & Branch 1 &  \\ \toprule
$h$&Dofs&Cont. & Defl. & Pred. & Cont. & Defl. & Pred. \\ \midrule
 0.0333& 27,455 &118 		& 0 		     & 53		  & 108 			 & 49 	 & 34  \\
 0.0250& 48,605 &	136 & 0		     &	37  & 	107		&	34 & 37 \\
 0.0125&  193,205&113		& 0		      &35		 &106			&45	 &36  \\ \bottomrule
 &&&&&&& \\
 & & & Branch 2 &  &   & Branch 3 &  \\ \toprule
$h$& Dofs &Cont. & Defl. & Pred. & Cont. & Defl. & Pred. \\ \midrule
 0.0333& 27,455& 166 		& 199 		     & 55		  & 166		 & 149 	 & 55   \\
 0.0250&48,605 & 145 		& 123 		     & 45		  & 145			 & 157 	 & 45   \\
 0.0125&   193,205&128	& 		   151   &	46	 &		128	& 	146 & 46 \\
 \bottomrule
\end{tabular}
\end{center}
\caption{The cumulative total numbers of BM solver iterations required in the continuation, deflation and prediction phases of the double-pipe problem with natural boundary conditions on the outlets.}
\label{tab:doublepipeneumanniters}
\end{table}
\subsection{Roller-type pump}
In this example problem \cite[Sec.\ 2.1.4.4]{Deng2018}, the domain is given by
\begin{align*}
\Omega = (0,1)^2 \backslash \left\{(x,y) \in (0,1)^2 : \left(x-0.5\right)^2 +  \left(y-0.5\right)^2 \leq \left(0.3\right)^2 \right\}.
\end{align*}
The boundary conditions on $\vect{u}$ are given by:
\begin{align*}
\vect{u} =
\begin{cases}
(0, 1-20(x - 0.61)^2)^\top,  & \text{if} \; 0.56 < x < 0.66 \; \text{and} \; y = 0, \\
(1-20(y - 0.95)^2, 0)^\top, & \text{if} \; x = 1 \; \text{and} \;  0.9< y < 1,\\
10/3 (y - 1/2, 1/2 - x)^\top, & \text{if} \; (x-0.5)^2 + (y-0.5)^2  = (0.3)^2,\\
(0,0)^\top, & \text{elsewhere}.
\end{cases}
\end{align*}
These boundary conditions model an inlet on the bottom of the domain and an outlet on the right of the domain with a pump rotating at a constant velocity in the center of the domain where the fluid experiences no-slip boundary conditions. We employ the Taylor--Hood discretization and initialize $\mu_0 = 1000$. Deflation finds the second branch at $\mu = 6.78$.

A global and local minimum of the problem are shown in \cref{fig:rollerminima}. The local minimum chooses to avoid the pump in favor of taking the path with the shortest distance from the inlet to the outlet, while the global minimum exploits the rotation given by the pump. The local minimizer for $q = 1/10$ has areas where $\rho \approx 1/2$, which has an ambiguous physical interpretation. In order to verify whether $\rho$ should be equal to zero or one in such areas, a mixture of grid-sequencing and continuation in $q$ was performed, resulting in the solution shown in \cref{fig:refinedrollerminima}. The mesh-independence of the algorithm is verified in \cref{tab:rollerpumpiters}.
\begin{table}[ht]
\small
\centering
\begin{tabular}{ll|lll|lll}
BM solver& & Branch 0 &  &   & Branch 1 &  \\ \hline
$h_\text{min}$/$h_\text{max}$ & Dofs & Cont. & Defl. & Pred. & Cont. & Defl. & Pred. \\ \hline
0.0258/0.0509&  7388 & 260		& 0		      &55	 &118			&80 	 &35  \\
 0.0127/0.0255& 29,174 & 186		& 0		      &51		 &75			&117 	 &25  \\
0.0064/0.0127&113,096 & 177		& 0		     &46 		  &83  			&99		 &29 \\
\end{tabular}
\caption{The cumulative total numbers of BM solver iterations required in the continuation, deflation and prediction phases of the roller-type pump problem to find the solutions shown in \cref{fig:rollerminima}. The number of iterations are mesh-independent.} 
\label{tab:rollerpumpiters}
\end{table}
\begin{figure}[ht]
\centering
\subfloat[The local (left) and global (right) minimizers, $\rho$.]{\includegraphics[height = 0.3\textwidth]{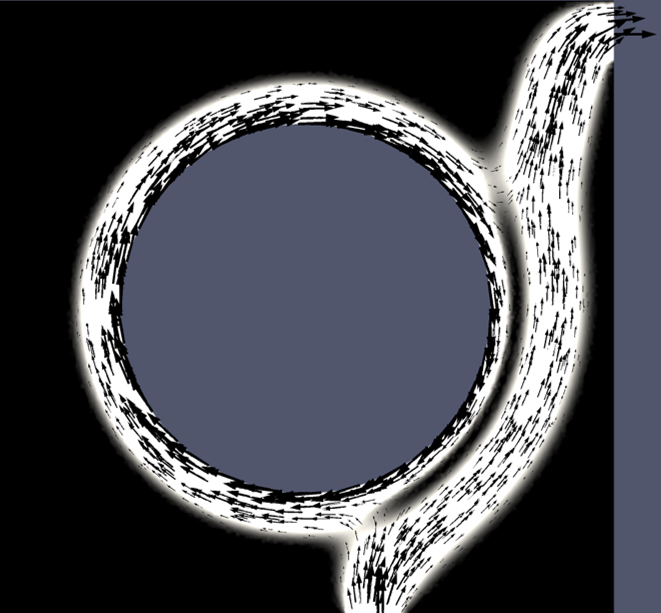}
\includegraphics[height = 0.3\textwidth]{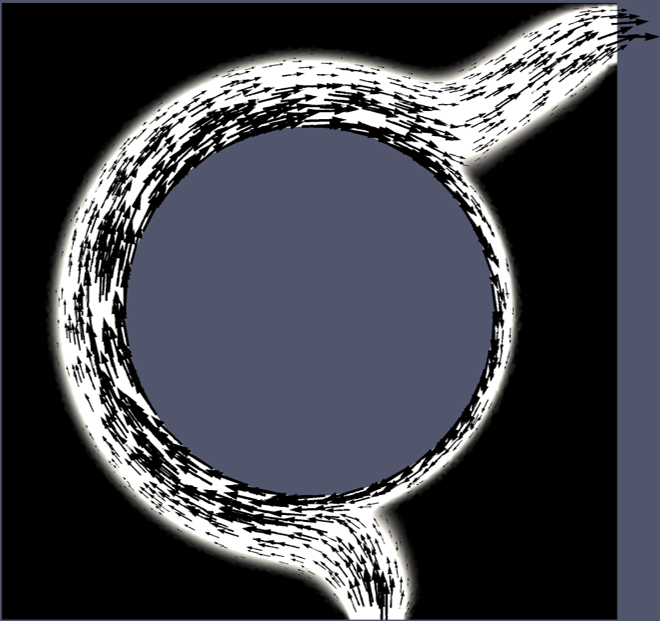}
\label{fig:rollerminima}}
\subfloat[Refined local minimizer.]{\includegraphics[height = 0.3\textwidth]{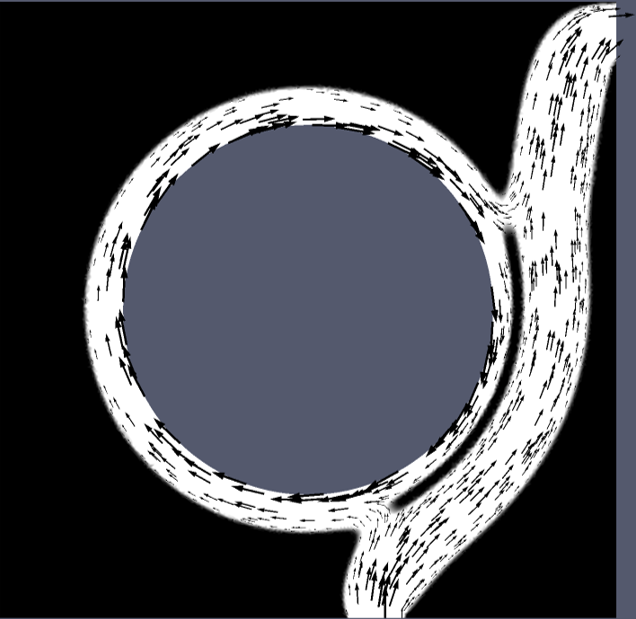}
\label{fig:refinedrollerminima}}
\caption{(a) The material distribution of the local and global minimizers of the roller-type pump optimization problem, with $h_{\text{min}} = 6.4\times 10^{-3}$. Black corresponds to a value of $\rho = 0$ and white corresponds to a value of $\rho = 1$. The gray area is the hole removed from the domain. The arrows indicate the direction and magnitude of the velocity, $\vect{u}$. The values of the objective functional are $J = 26.84$ (left) and $J = 22.67$ (right). (b) A mixture of grid-sequencing of the mesh where $\rho \approx 1/2$ and the continuation of $q$ to larger values was performed on the local minimum of the roller-type pump optimization problem in order to remove areas where $\rho \approx 1/2$. The resulting refined solution has clearly defined areas of $\rho =0$ and $\rho = 1$. Here $h_{\text{min}} =  0.0033$, $q = 0.65$ and $J = 29.17$.}
\end{figure}

\subsection{Five-holes double-pipe with Navier--Stokes}
\label{sec:fiveholesdoublepipe}
We consider the original Borrvall--Petersson double-pipe problem with Dirichlet outflow conditions, but modify the domain to include five small decagonal holes with inscribed radius 0.05 positioned at $(1/2, 1/3)$, $(1/2, 2/3)$, $(1, 1/4)$, $(1, 1/2)$ and $(1, 3/4)$, as shown in \cref{fig:fiveholes}. We further show the flexibility of our method by considering fluid flow constrained by the incompressible Navier--Stokes equations. This is achieved by introducing Lagrange multipliers, $\vect{u}_a \in H^1_0(\Omega)^d$, $p_a \in L^2_0(\Omega)$, and $p_{a,0} \in \mathbb{R}$, to enforce the Navier--Stokes equations. We then define the Lagrangian as
\begin{align}
\begin{split}
&L(\vect{u}, \rho, \vect{u}_a, p, p_a, p_0, p_{a,0}, \lambda) \\
&\indent= J(\vect{u},\rho) - \int_\Omega p \; \text{div}(\vect{u})  \text{d}x - \int_\Omega p_0 p \; \text{d}x  -  \int_\Omega \lambda (\gamma - \rho) \text{d}x  - \int_\Omega p_{a,0} p_a \; \text{d}x\\
&\indent \indent   - \int_\Omega \nu \nabla \vect{u} : \nabla \vect{u}_a + \delta (\vect{u} \cdot \nabla) \vect{u} \cdot \vect{u}_a +\alpha(\rho) \vect{u} \cdot \vect{u}_a - p_a \; \mathrm{div}(\vect{u}_a) \; \mathrm{d}x,
\end{split}
\label{eq:NavierStokesLagrangian}
\end{align}
where $\delta$ denotes the (constant) fluid density. We choose $\nu = 1$ and $\delta = 1$, with other variables equal to those in the original double-pipe problem. We employ the Taylor--Hood discretization and initialize $\mu_0 = 200$. We use feasible tangent prediction and apply an $l^2$-minimizing linesearch in the continuation. 

The holes have the effect of substantially increasing the number of local minima, as shown in \cref{fig:fiveholes-navier}. This example reveals that the number of local minima of a topology optimization problem is not always small and that the deflated barrier method is effective in finding many of them. A small number of solutions found exhibited regions of ambiguity $\rho \approx 1/2$, and underwent grid-sequencing and continuation in $q$ in order to remove these areas. We note that there are more solutions that deflation did \emph{not} find, since there are missing $\mathbb{Z}_2$ symmetric pairs which must also be solutions.
\begin{figure}
\centering
\includegraphics[width = 0.6\textwidth]{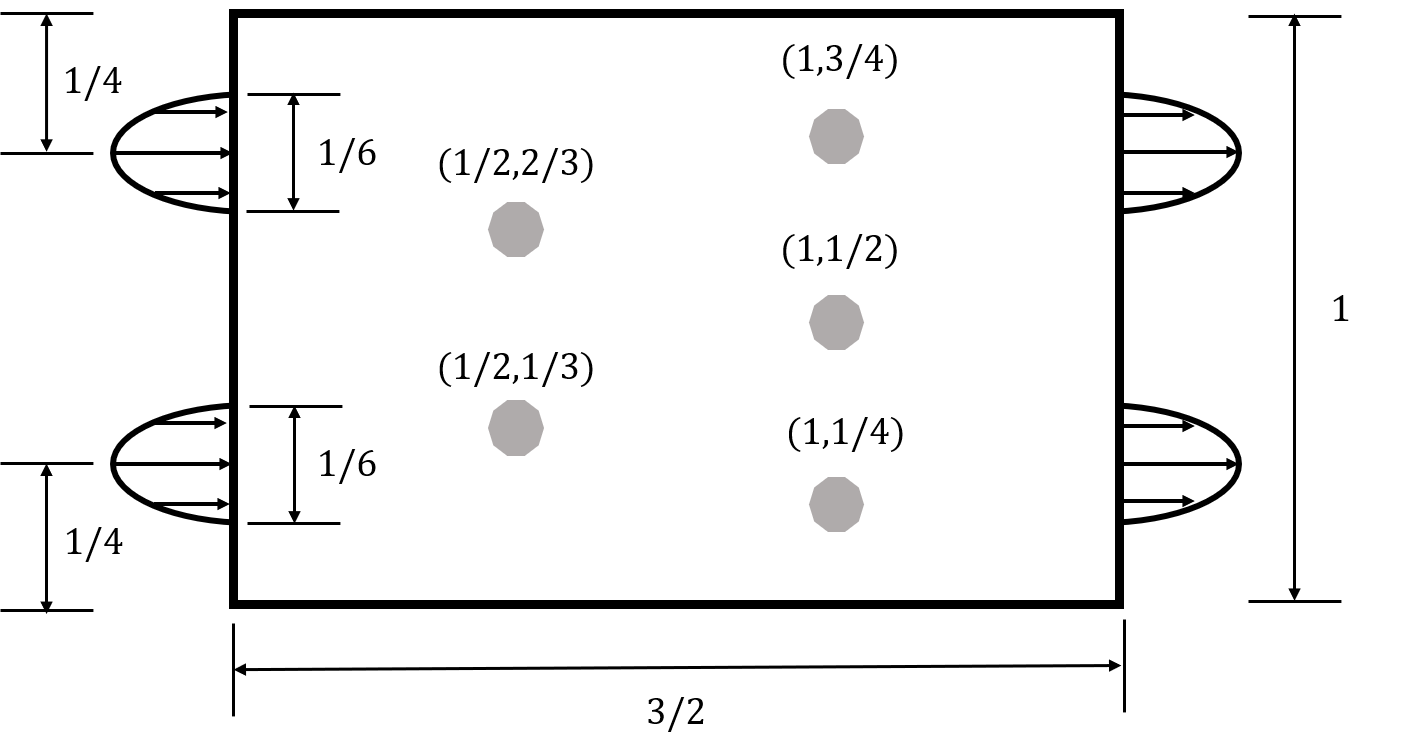}
\caption{Setup of the five-holes double-pipe problem.}
\label{fig:fiveholes}
\end{figure}

\subsection{Cantilever beam}
In this example we use the deflated barrier method to find multiple stationary points of compliance problems. However, due to the lack of regularity of the Lagrange multipliers associated with the box constraints on $\rho$, the solver exhibits mesh-dependent behavior. With each refinement of the mesh, the number of iterations required for the solver to converge increases in an unbounded way. This is difficult to resolve, and appropriate techniques to address this are the subject of ongoing research. Practically, we first run the algorithm on a coarse mesh and then use grid-sequencing to obtain refined solutions.

The two-dimensional cantilever beam optimization problem is to find minimizers of \cref{complianceopt} that satisfy the boundary conditions
\begin{align*}
\sigma \vect{n} & = (0,-1)^\top && \text{on} \; \Gamma_N,\\
\vect{u} &= (0,0)^\top && \text{on} \; \Gamma_D,\\
\sigma\vect{n} & = (0,0)^\top && \text{on} \; \partial \Omega \backslash \{\Gamma_N \cup \Gamma_{D} \},
\end{align*}
with domain $\Omega = (0, 1.5) \times (0,1)$, where
\begin{align*}
\Gamma_D  &= \{ (x,y) \in \partial \Omega : x = 0 \},\\
\Gamma_N  & = \left \{ (x,y) \in \partial \Omega : 0.1 \leq y \leq 0.2,\; x = 1.5 \right \} \cup \left \{ (x,y) \in \partial \Omega : 0.8 \leq y \leq 0.9,\; x = 1.5 \right \}.
\end{align*}
These boundary conditions describe a cantilever clamped to the $y$-axis with two traction forces pulling the cantilever vertically downwards in two places at $x= 1.5$. We use $\mathrm{CG}_1$ finite elements for all variables. We initialize the deflated barrier method at $\mu_0 = 10$ and discover the second branch at $\mu = 4.25\times 10^{-3}$. The two solutions found are shown in \cref{fig:doublecantilever}.
\begin{figure}
\centering
\includegraphics[width = 0.49\textwidth]{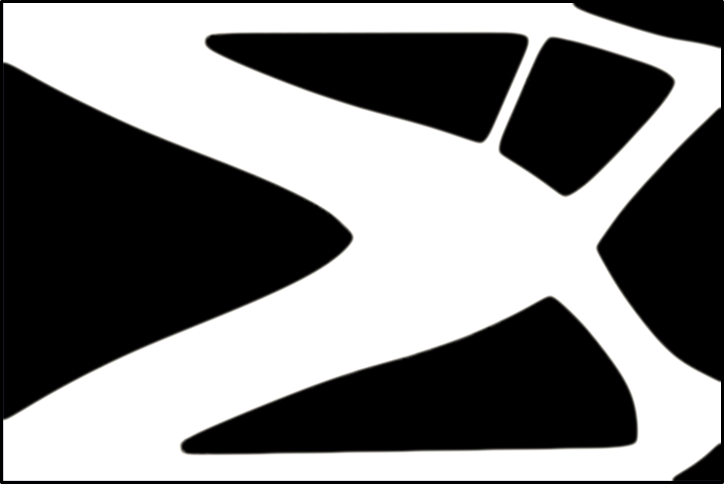}
\includegraphics[width = 0.49\textwidth]{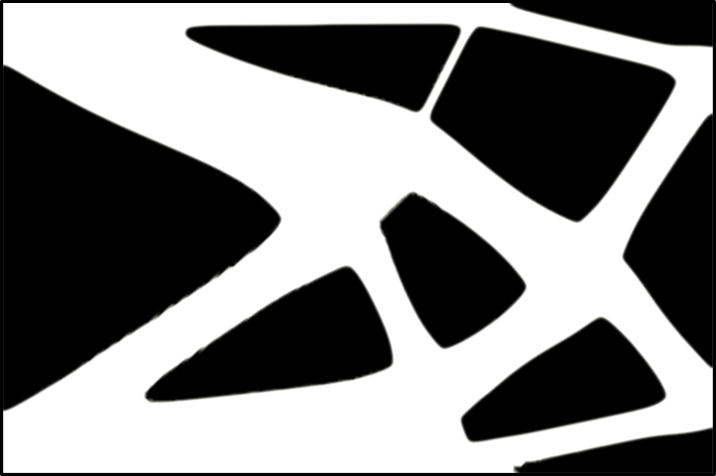}
\caption{The material distribution of two solutions of the cantilever beam. The parameters are $h_\text{min} = 3.56 \times 10^{-3}$, $h_\text{max} = 5.70 \times 10^{-2}$, $\epsilon = 4.40 \times 10^{-3}$, $\beta = 1.8 \times 10^{-4}$, $\gamma = 0.5$, $\epsilon_{\text{SIMP}} = 10^{-5}$, $p_s=3$, and  the Lam\'e coefficients are $\mu_l = 75.38$ and $\lambda_l = 64.62$. $J =6.18\times 10^{-3}$ (left) and $J=6.08\times 10^{-3}$ (right).} \label{fig:doublecantilever}
\end{figure}
\subsection[MBB beam]{Messerschmitt--B\"olkow--Blohm (MBB) beam}
The two-dimensional MBB beam optimization problem is to find minimizers of \cref{complianceopt} that satisfy the boundary conditions
\begin{align*}
\vect{u} \cdot (1,0)^\top & = 0 && \text{on} \; \Gamma_{D_1},\\
\vect{u} \cdot (0,1)^\top & = 0 && \text{on} \; \Gamma_{D_2},\\
\sigma \vect{n} & = (0,-10)^\top && \text{on} \; \Gamma_N,\\
\sigma\vect{n} & = (0,0)^\top && \text{on} \; \partial \Omega \backslash \{\Gamma_N \cup \Gamma_{D_1} \cup \Gamma_{D_2} \} ,\\
\end{align*}
where $\Omega = (0, 3) \times (0,1)$ and
\begin{align*}
\Gamma_{D_1}  &= \{ (x,y) \in \partial \Omega : x = 0 \},\;
\Gamma_{D_2}  = \left\{ (x,y) \in \partial \Omega : y = 0,\; 2.9 \leq  x \leq 3 \right \},\\
\Gamma_N  &= \left \{ (x,y) \in \partial \Omega : y = 1, \;  0 \leq x \leq 0.1 \right \}.
\end{align*}
These boundary conditions describe a half-beam that is fixed horizontally on the $y$-axis and fixed vertically at its bottom right corner on the $x$-axis. There is a boundary force pushing vertically downwards at the top left corner, which represents the middle of the beam when the half-beam is mirrored. We use the same finite element discretization and initialize the deflated barrier method at $\mu_0 = 50$. Deflation discovers the second branch at $\mu = 1.58 \times 10^{-1}$. As in the cantilever problem, the algorithm is mesh-dependent and grid-sequencing is used to find refinements. The two solutions found are shown in  \cref{fig:mbb-beam}.
\begin{figure}[ht]
\centering
\includegraphics[width = 0.49\textwidth]{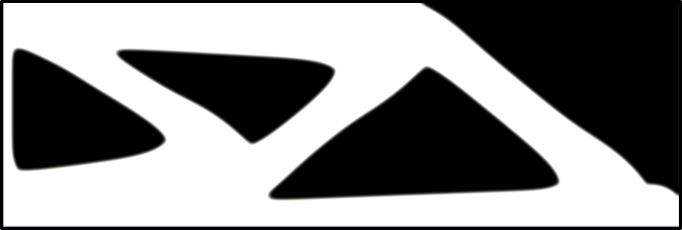}
\includegraphics[width = 0.49\textwidth]{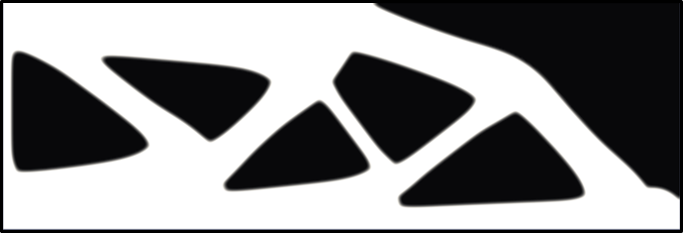}
\caption{The material distribution of two solutions of the MBB beam. The parameters are $h_\text{min} = 7.07 \times 10^{-3}$, $h_\text{max} = 2.83 \times 10^{-2}$, $\epsilon = 1.90 \times 10^{-2}$, $\beta = 9 \times 10^{-3}$, $\gamma = 0.535$, $\epsilon_{\text{SIMP}} = 10^{-5}$, $p_s = 3$, and  the Lam\'e coefficients are $\mu_l = 75.38$ and $\lambda_l = 64.62$. $J=0.723$ (left) and $J=0.681$ (right).} \label{fig:mbb-beam}
\end{figure} 

\section{Conclusions} 
\label{sec:conclusions}
In this work we have developed an algorithm for systemically finding multiple solutions of topology optimization problems. We opted for the density approach, which requires no prior knowledge of the shape or topology of the design. To handle the box constraints on the material distribution $\rho$, we formulate an enlarged-feasible set barrier functional combined with a primal-dual active set solver to ensure the iterates are feasible with respect to the true box constraints. We observe computationally that this approach does not suffer the ill-conditioning or asymptotically infeasible Newton steps that normally hinder primal barrier methods. Furthermore, unlike traditional primal-dual interior point methods, if the Lagrange multipliers of the box constraints in the underlying continuous problem are sufficiently regular, this formulation exhibits mesh-independence. The algorithm successfully found multiple solutions in several problems constrained by the Stokes equations, the Navier--Stokes equations, and the equations of linear elasticity.

\section*{Code availability}
\label{sec:codeavailability}
For reproducibility, the solver and example files to generate the iteration tables and solutions can be found at \url{https://bitbucket.org/papadopoulos/deflatedbarrier/}. The version of the software used in this paper is archived on Zenodo \cite{dbmcode2020}.

\appendix
\section{Benson and Munson's active-set reduced space solver}
\label{sec:BMsolver} 
We show that, in the context of a linear elliptic control problem, if the active and inactive sets of HIK and BM coincide, then the updates calculated for the active and inactive sets are equal. In essence, we show that the algorithms produce iterates that are a \textit{half-step} out of sync, where we define the notion of a half-step below. If the active and inactive sets of BM were redefined to be the same as HIK, then BM would inherit the provably-good convergence properties of HIK. To our knowledge, this is the first analytical result concerning BM. Although the result does not cover the nonlinear case, it might help build an intuitive understanding as to why BM effectively solves the semismooth formulations found in this work. 

Consider the minimization problem
\begin{align}
\min_{y \in L^2(\Omega)} J(y) := \frac{1}{2} (y, Ay)_{L^2(\Omega)} - (f,y)_{L^2(\Omega)} \quad
\text{subject to} \quad y \geq \phi, \label{eq:quadmininf}
\end{align}
where $(\cdot,\cdot)_{L^2(\Omega)}$ denotes the inner product in $L^2(\Omega)$, $f$ and $\phi \in L^2(\Omega)$, and $A \in \mathcal{L}(L^2(\Omega))$ is self-adjoint and coercive. It can be shown there exists a unique solution $y^*$ to \cref{eq:quadmininf} and there exists a Lagrange multiplier $\lambda^* \in L^2(\Omega)$ such that $(y^*,\lambda^*)$ is the unique solution to
\begin{align}
\begin{split}
&Ay - \lambda = f, \\
&y \geq \phi,\; \lambda \geq 0,\; (\lambda, y - \phi)_{L^2(\Omega)} = 0.\label{eq:compinf}
\end{split}
\end{align}
In order to avoid confusion, we denote the iterates generated by HIK by $y_k$ and the iterates generated by BM by $u_k$. The active and inactive sets at iteration $k$, $\mathfrak{A}_k$ and $\mathfrak{I}_k$ in HIK and the active and inactive sets $\mathcal{A}_k$ and $\mathcal{I}_k$ in BM are defined by
\begin{align*}
\mathfrak{A}_k = \{x: \lambda_{k}- (y_{k} - \phi_i)> 0 \}, \;\;&\text{and}\;\;
\mathfrak{I}_k = \{x:  \lambda_{k} - (y_{k}- \phi) \leq 0 \},\\
\mathcal{A}_k = \{x : u_{k} = \phi \; \text{and} \; F(u_{k}) > 0 \}, \;\; &\text{and} \;\;
\mathcal{I}_k = \{x : u_{k} > \phi \; \text{or} \; F(u_{k}) \leq 0 \},
\end{align*} 
where $F(u_k) \in L^2(\Omega)$ is the $L^2$-dual representation of the Fr\'echet derivative of $J(u_k)$.  As in Hinterm\"uller et al.\ \cite[Sec.\ 4]{HintermullerIto2003}, we define $E_{\mathfrak{A}_k}$ the extension-by-zero operator for $L^2(\mathfrak{A}_k)$ to $L^2(\Omega)$-functions, and its adjoint $E_{\mathfrak{A}_k}^*$, the restriction operator of $L^2(\Omega)$ to $L^2(\mathfrak{A}_k)$-functions. We define $E_{\mathfrak{I}_k}$, $E_{\mathfrak{I}_k}^*$, $E_{\mathcal{A}_k}$, $E_{\mathcal{A}_k}^*$, $E_{\mathcal{I}_k}$ and $E_{\mathcal{I}_k}^*$ similarly. We note that all these restriction and extension operators are linear. We now present the infinite-dimensional description of the active-set reduced space strategy (BM).
\begin{enumerate}[label = {(BM\arabic*)}]
\setlength\itemsep{0em}
\item Choose a feasible guess $u_{0} \in L^2(\Omega)$ and set $k$ = 0;
\label{it:BM1}
\item Find $\delta u_k \in L^2(\Omega)$ such that $E_{\mathcal{I}_k}^* A E_{\mathcal{I}_k} E_{\mathcal{I}_k}^* \delta u_{k} = -E_{\mathcal{I}_k}^*(A u_{k} - f)$ \\and  $E_{\mathcal{A}_k}^* \delta u_k=0$;
\label{it:BM2}
\item Set $u_{k+1} =  \pi(u_{k} + \delta u_{k})$ where $\pi$ is the $L^2$-projection onto the constraint, i.e.\ for any given $u \in L^2(\Omega)$, $\pi(u) \in K:=\{ v \in L^2(\Omega): v \geq \phi\}$ satisfies
\begin{align*}
\| u - \pi(u) \|_{L^2(\Omega)} \leq \| u - v \|_{L^2(\Omega)} \; \; \text{for all} \; v \in K.
\end{align*}
\item If convergence is reached, terminate; otherwise set $k \gets k +1$ and go to step \labelcref{it:BM2}.
\label{it:BM4}
\end{enumerate}

\begin{theorem}

Let $y_{k}$ denote the primal variable of HIK at iteration $k$ and let $\delta y_{k}$ denote the update calculated at iteration $k$. Let $\lambda_{k}$ denote the dual variable at iteration $k$. We define half steps such that the active set is updated first, i.e.\ $E_{\mathfrak{A}_k} y_{k+1/2} = E_{\mathfrak{A}_k} y_{k+1}$ and  $E_{\mathfrak{I}_k} y_{k+1/2} = E_{\mathfrak{I}_k} y_{k}$.

Let $u_{k}$ denote the primal variable of BM at iteration $k$ and let $\delta u_{k}$ denote the update calculated at iteration $k$.

Suppose that $\mathcal{A}_k = \mathfrak{A}_k$, $\mathcal{I}_k = \mathfrak{I}_k$ and $E^*_{\mathfrak{I}_k} y_{k} = E^*_{\mathcal{I}_k}u_{k}$. Then the following three equalities hold;
\begin{enumerate}[label = {(E\arabic*)}]
\setlength\itemsep{0em}
\item $y_{k+1/2}= u_{k}$; \label{it:BMtheorem1}
\item $E^*_{\mathfrak{I}_k}\delta y_{k}= E^*_{\mathcal{I}_k} \delta u_{k}$;\label{it:BMtheorem2}
\item $y_{k+3/2}= u_{k+1}$.\label{it:BMtheorem3}
\end{enumerate}
\end{theorem}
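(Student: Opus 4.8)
The plan is to prove the three equalities in the order stated, feeding each into the next; the whole argument is a pointwise comparison on the two sets $\mathfrak A_k=\mathcal A_k$ and $\mathfrak I_k=\mathcal I_k$, which partition $\Omega$, together with one use of the coercivity of $A$. For \labelcref{it:BMtheorem1} I would compare $y_{k+1/2}$ and $u_k$ on each piece: on $\mathfrak A_k$ the HIK active-set rule (the analogue of \cref{eq:activelower} with the lower bound $\phi$) forces $y_{k+1}=\phi$, hence $y_{k+1/2}=\phi$ there by the definition of the half step, while $u_k=\phi$ on $\mathcal A_k$ by the very definition of $\mathcal A_k$; on $\mathfrak I_k$ the half step gives $y_{k+1/2}=y_k$, and $E^*_{\mathfrak I_k}y_k=E^*_{\mathcal I_k}u_k$ is a standing hypothesis. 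Thus $y_{k+1/2}=u_k$.

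For \labelcref{it:BMtheorem2} the idea is to rewrite both reduced Newton systems so that they become literally the same linear equation. On the HIK side I would restrict the linearised equation $A\,\delta y_k-\delta\lambda_k=-(Ay_k-f)+\lambda_k$ to $\mathfrak I_k$, substitute the inactive-set dual rule $E^*_{\mathfrak I_k}\delta\lambda_k=-E^*_{\mathfrak I_k}\lambda_k$ and the active-set primal rule $E^*_{\mathfrak A_k}\delta y_k=E^*_{\mathfrak A_k}(\phi-y_k)$, and use $E_{\mathfrak I_k}E^*_{\mathfrak I_k}+E_{\mathfrak A_k}E^*_{\mathfrak A_k}=I$ to split $E^*_{\mathfrak I_k}A\,\delta y_k=(E^*_{\mathfrak I_k}AE_{\mathfrak I_k})E^*_{\mathfrak I_k}\delta y_k+(E^*_{\mathfrak I_k}AE_{\mathfrak A_k})E^*_{\mathfrak A_k}\delta y_k$. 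A short rearrangement, using the description of $y_{k+1/2}$ just obtained, collapses the right-hand side to $-E^*_{\mathfrak I_k}(Ay_{k+1/2}-f)$, so HIK's inactive block solves $(E^*_{\mathfrak I_k}AE_{\mathfrak I_k})E^*_{\mathfrak I_k}\delta y_k=-E^*_{\mathfrak I_k}(Ay_{k+1/2}-f)$. The BM reduced system \labelcref{it:BM2} reads $(E^*_{\mathcal I_k}AE_{\mathcal I_k})E^*_{\mathcal I_k}\delta u_k=-E^*_{\mathcal I_k}(Au_k-f)$; since $\mathcal I_k=\mathfrak I_k$ and $u_k=y_{k+1/2}$ by \labelcref{it:BMtheorem1}, it is the same equation. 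As $A$ is coercive, so is its compression $E^*_{\mathfrak I_k}AE_{\mathfrak I_k}$; that operator is therefore invertible, and \labelcref{it:BMtheorem2} follows.

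For \labelcref{it:BMtheorem3} the key observation is that the full, unprojected BM step already agrees with $y_{k+1}$: on $\mathfrak A_k$ one has $u_k=\phi$, $E^*_{\mathcal A_k}\delta u_k=0$ and $y_k+\delta y_k=\phi$, while on $\mathfrak I_k$ one has $u_k+\delta u_k=y_k+\delta y_k$ by the hypothesis and \labelcref{it:BMtheorem2}; hence $u_k+\delta u_k=y_{k+1}$ identically, so $u_{k+1}=\pi(y_{k+1})=\max(y_{k+1},\phi)$ pointwise. It then remains to identify $\mathfrak A_{k+1}$. The HIK update gives $E^*_{\mathfrak I_k}\lambda_{k+1}=0$ and $E^*_{\mathfrak A_k}\lambda_{k+1}=E^*_{\mathfrak A_k}(Ay_{k+1}-f)$, which, together with the defining inequality of $\mathfrak A_{k+1}$ and $y_{k+1}=\phi$ on $\mathfrak A_k$, yields $\mathfrak A_{k+1}=\{x\in\mathfrak A_k: F(y_{k+1})(x)>0\}\cup\{x\in\mathfrak I_k: y_{k+1}(x)<\phi(x)\}$. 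Comparing with $y_{k+3/2}$, which equals $\phi$ on $\mathfrak A_{k+1}$ and $y_{k+1}$ on $\mathfrak I_{k+1}$: on $\mathfrak A_k$ we have $y_{k+1}=\phi$, so $\max(y_{k+1},\phi)=\phi=y_{k+3/2}$; on $\mathfrak I_k\cap\mathfrak A_{k+1}$ we have $y_{k+1}<\phi$, so $\max(y_{k+1},\phi)=\phi=y_{k+3/2}$; on $\mathfrak I_k\cap\mathfrak I_{k+1}$ we have $y_{k+1}\ge\phi$, so $\max(y_{k+1},\phi)=y_{k+1}=y_{k+3/2}$. Hence $u_{k+1}=y_{k+3/2}$.

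The arithmetic in the three steps is routine once the right objects are written down; the part I expect to be the main obstacle is the last identification in \labelcref{it:BMtheorem3} — verifying that the $L^2$-projection $\pi$ clips $y_{k+1}$ down to $\phi$ on precisely the set where HIK's next active set $\mathfrak A_{k+1}$ turns on and nowhere else, which is what forces one to write out the explicit formula for $\lambda_{k+1}$ and the NCP-based definition of $\mathfrak A_{k+1}$. A minor but essential point to record is that coercivity of $A$ transfers to every compression $E^*_{\mathcal S}AE_{\mathcal S}$, since that is what legitimises the cancellation argument in \labelcref{it:BMtheorem2}.
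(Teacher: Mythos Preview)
Your proposal is correct and follows essentially the same approach as the paper: both establish \labelcref{it:BMtheorem1} by a direct active/inactive comparison, both reduce \labelcref{it:BMtheorem2} to showing that the HIK and BM inactive-block linear systems coincide once $u_k=y_{k+1/2}$, and both handle \labelcref{it:BMtheorem3} by a case analysis over the intersections of $\mathfrak A_k,\mathfrak I_k$ with $\mathfrak A_{k+1},\mathfrak I_{k+1}$. Your version is in fact slightly tidier in two places: you make explicit the coercivity-of-the-compression argument needed for uniqueness in \labelcref{it:BMtheorem2} (the paper asserts equality of solutions without comment), and you streamline \labelcref{it:BMtheorem3} by first recording $u_k+\delta u_k=y_{k+1}$ and then reading off $u_{k+1}=\max(y_{k+1},\phi)$, which collapses the paper's four cases into three.
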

\begin{proof}
It is shown in \cite{HintermullerIto2003} that the update for the inactive set of HIK satisfies
\begin{align*}
E^*_{\mathfrak{I}_k}(A \delta y_{k}) &= -E^*_{\mathfrak{I}_k}(A y_{k} - f).
\end{align*}
Expanding the left and right hand sides, we see that
\begin{align*}
E_{\mathfrak{I}_k}^*A E_{\mathfrak{I}_k} E_{\mathfrak{I}_k}^* \delta y_{k} + E_{\mathfrak{I}_k}^* A E_{\mathfrak{A}_k}E_{\mathfrak{A}_k}^* \delta y_{k}= -E_{\mathfrak{I}_k}^* AE_{\mathfrak{I}_k} E_{\mathfrak{I}_k}^* y_{k} - E^*_{\mathfrak{I}_k}A E_{\mathfrak{A}_k} E_{\mathfrak{A}_k}^*y_k+ E_{\mathfrak{I}_k}^*f.
\end{align*}
Subtracting the second term on the left hand side, we see that
\begin{align}
E_{\mathfrak{I}_k}^* A E_{\mathfrak{I}_k} E_{\mathfrak{I}_k}^* \delta y_{k} = -E_{\mathfrak{I}_k}^*AE_{\mathfrak{I}_k}E_{\mathfrak{I}_k}^* y_k - E_{\mathfrak{I}_k}^*AE_{\mathfrak{A}_k} E_{\mathfrak{A}_k}^* (y_{k}+  \delta y_{k})+ E_{\mathfrak{I}_k}^*f. \label{eq:equiv3}
\end{align}
By definition $E_{\mathfrak{A}_k}^*(y+  \delta y_{k}) = E_{\mathfrak{A}_k}^*y_{k+1/2}$ and by assumption $\mathcal{A}_k = \mathfrak{A}_k$, $\mathcal{I}_k = \mathfrak{I}_k$ and $E_{\mathfrak{I}_k}^*y_{k}= E_{\mathfrak{I}_k}^* u_{k}$. Furthermore, since by assumption $\mathcal{A}_k = \mathfrak{A}_k$ and since $E_{\mathfrak{A}_k}^*\delta y_{k} = E_{\mathfrak{A}_k}^*(\phi - y_{k})$ as derived in \cite{HintermullerIto2003} we observe that 
\begin{align}
E_{\mathfrak{A}_k}^* y_{k+1/2} = E_{\mathfrak{A}_k}^* (y_{k} + \phi - y_k) = E_{\mathfrak{A}_k}^*u_{k}. \label{eq:equiv5}
\end{align}
Since, by definition, the first half step in HIK is only an update on the active set, we see that $E_{\mathfrak{I}_k}^*y_{k+1/2}= E_{\mathfrak{I}_k}^*y_{k}= E_{\mathfrak{I}_k}^*u_{k}$.  We therefore have
\begin{align}
y_{k+1/2} = u_{k},
\end{align}
and \labelcref{it:BMtheorem1} holds. From \cref{eq:equiv5}, we can see that \cref{eq:equiv3} is equivalent to
\begin{align}
E_{\mathfrak{I}_k}^*AE_{\mathfrak{I}_k} E_{\mathfrak{I}_k}^*\delta y_{k} &= -E_{\mathfrak{I}_k}^*(A u_{k} - f). \label{eq:equiv4}
\end{align}
We note that \cref{eq:equiv4} is the linear system solved to calculate the update for the inactive set of BM and hence 
\begin{align}
 E_{\mathfrak{I}_k}^*\delta y_{k} =  E_{\mathfrak{I}_k}^*\delta u_{k}. \label{eq:equiv1}
\end{align}
Hence \labelcref{it:BMtheorem2} holds. We now show that $y_{k+3/2} = u_{k}$ by considering four possible cases.

(First case) Consider $C = \mathfrak{I}_{k} \cap \mathfrak{I}_{k+1}$. If $C$ has measure zero, then we are done. Suppose that $|C|>0$. Then since the dual variable is set to zero on the inactive set, we know that $E_C^* \lambda_{k+1} = 0$. Therefore, by definition of $\mathfrak{I}_{k+1}$, we know that $E_C^* y_{k+1} \geq E_C^* \phi$. Hence $E_C^* u_{k} + E_C^* \delta u_{k}\geq E_C^*\phi$ and therefore $E_C^*u_{k+1}= E_C^*\pi(u_{k} + \delta u_{k}) = E_C^*u_{k}+ E_C^*\delta u_{k} = E_C^* y_{k+1}$. The first half step in HIK only changes the active set, hence $E_C^*y_{k+3/2} = E_C^*u_{k+1}$.

(Second case) Consider $C=  \mathfrak{I}_{k} \cap \mathfrak{A}_{k+1}$. If $C$ has measure zero, then we are done. Suppose that $|C|>0$.  Then since the dual variable is set to zero on the inactive set, we know that $E_C^*\lambda_{k+1} = 0$. Therefore, by definition of $\mathfrak{A}_{k+1}$, we know that $E_C^*y_{k+1}< E_C^*\phi$. Hence $E_C^*u_{k} + E_C^*\delta u_{k}< E_C^*\phi$ and therefore $E_C^*u_{k+1} = E_C^*\pi(u_{k} + \delta u_{k}) = E_C^*\phi$. By the half-step update of the active set, $\mathfrak{A}_{k+1}$, $E_C^*y_{k+3/2} = E_C^*\phi$. Hence $E_C^*y_{k+3/2} = E_C^*u_{k+1}$.

(Third case) Consider $C= \mathfrak{A}_{k} \cap \mathfrak{A}_{k+1}$. If $C$ has measure zero, then we are done. Suppose that $|C|>0$.  This implies that $E_C^*y_{k+3/2}= E_C^*\phi$. Since $\mathfrak{A}_{k} = \mathcal{A}_k$, we know that $E_C^*u_{k+1} = E_C^*\phi$. Hence $E_C^*y_{k+3/2} = E_C^*u_{k+1}$.

(Fourth case) Consider $C= \mathfrak{A}_{k} \cap \mathfrak{I}_{k+1}$. If $C$ has measure zero, then we are done. Suppose that $|C|>0$. By definition of $ \mathfrak{A}_k$, this implies that $E_C^*y_{k+1} = E_C^*\phi$. Furthermore, by definition of $\mathfrak{I}_{k+1}$ and since the first half step of HIK only changes the active set, we see that $E_C^*y_{k+3/2} = E_C^*\phi$. By definition of $\mathcal{A}_k$, we know that $E_C^*u_{k+1} = E_C^*\phi$. Hence $E_C^*y_{k+3/2} = E_C^*u_{k+1}$.

From the four cases, we conclude that
\begin{align}
y_{k+3/2} = u_{k+1}. \label{eq:equiv2}
\end{align}
\end{proof}

\section{Feasible tangent predictor} \label{sec:tangentpredictiontask}
Predictor-corrector methods are often used in tracing bifurcation diagrams \cite{Seydel2010}. The idea is that as the parameter of the problem changes, a cheap predictor generates an initial guess for the solution of the system with the new parameter. A corrector method is then used to converge from this initial guess to the true solution. In our context the primal-dual active-set solver is the corrector method. Our feasible tangent predictor method draws inspiration from the usual tangent predictor method, which solves a linear equation to find an initial guess, but applies box constraints to ensure the predicted guess is feasible.

The usual tangent predictor is derived as follows. Consider a Fr\'echet-differentiable equation $F(z^0,\mu^0) = 0$, where $\mu = \mu^0$ is the parameter we wish to vary. Consider a new parameter $\mu = \mu^1$ and let $\delta \mu := \mu^1 - \mu^0$. Furthermore, let $w := (z,\mu)$. The goal is to find $\delta z$ such that $z^0 + \delta z \approx z^1$ where $z^1$ is the solution to
\begin{align}
F(z^1,\mu^1) = 0. \label{eq:Fplus}
\end{align}
A first order approximation of \cref{eq:Fplus} is
\begin{align}
0 = F(z^1,\mu^1) \approx F(z^0,\mu^0) + F'(w)\delta w =  F'_z(z^0,\mu^0) \delta z + F'_\mu(z^0,\mu^0)\delta \mu.
\end{align}
Hence an initial guess, $z_* =  z^0 + \delta z$, can be calculated by solving
\begin{align}
F'_z(z^0,\mu^0)\delta z = -F'_\mu(z^0,\mu^0)\delta \mu,
\end{align}
for $\delta z$. In the context of the deflated barrier method this is equivalent to solving
\begin{align}
(L_{\mu^0}^{\epsilon_{\text{log}}})''|_{\vect{z},\vect{z}}(\vect{z}^0) \delta \vect{z} &+   (L_{\mu^0}^{\epsilon_{\text{log}}})''|_{\vect{z},\mu}(\vect{z}^0) \delta \mu= 0, \label{eq:toptangentpredictor}
\end{align}
for $\delta \vect{z}$. The traditional tangent predictor has no guarantee that $0 \leq \rho^0 + \delta \rho \leq 1$ a.e. To ensure that the initial guess is feasible, we instead transform \cref{eq:toptangentpredictor} into a complementarity problem. Consider the linear operator, $T(\vect{w})$ defined by
\begin{align*}
\langle T(\vect{w}^0), \delta \vect{w} \rangle = (L_{\mu^0}^{\epsilon_{\text{log}}})''|_{\vect{z},\vect{z}}(\vect{z}^0) \delta \vect{z} &+   (L_{\mu^0}^{\epsilon_{\text{log}}})''|_{\vect{z},\mu}(\vect{z}^0) \delta \mu.
\end{align*}
Given sufficient regularity of the dual variable $T(\vect{w})$ and the primal variable $\delta \vect{w}$, we can consider the following complementarity problem,
\begin{alignat}{2}
&\delta \rho(x) = -\rho^0(x) \; &&\text{and} \; T(\vect{w}^0) (x) \geq 0, \label{eq:ftp1}\\
\text{or} \;\;\; &-\rho^0(x) < \delta \rho(x) < 1-\rho^0(x) \;\; &&\text{and} \; T(\vect{w}^0)(x) = 0, \label{eq:ftp2}\\
\text{or} \;\;\; &\delta \rho(x) = 1-\rho^0(x) \; &&\text{and} \;T(\vect{w}^0)(x) \leq 0. \label{eq:ftp3}
\end{alignat}
Solving \cref{eq:ftp1}--\cref{eq:ftp3} constructs a feasible tangent predictor, $\vect{z}_*$. We note that this method does not perform a pointwise projection. For example, in the topology optimization of compliance, where we require the material distribution to live in $H^1(\Omega)$, we are instead performing a $H^1$-projection on the prediction update. In the case where \cref{eq:ftp2} holds a.e.~in $\Omega$, finding the feasible tangent predictor reduces to solving \cref{eq:toptangentpredictor}.

\bibliographystyle{siamplain}
\bibliography{references}
\end{document}